\newtheorem{thm}{Theorem}[section]
\newtheorem{cor}[thm]{Corollary}
\newtheorem{lemma}[thm]{Lemma}
\newtheorem*{thm1}{Theorem 6.1}
\theoremstyle{remark}
\newtheorem{remark}[thm]{Remark}
\newtheorem*{notation}{Notation}
\theoremstyle{definition}
\newtheorem{definition}[thm]{Definition}
\numberwithin{equation}{subsection}
\newcommand{\Hom}{\operatorname{Hom}}
\newcommand{\id}{{\mathtt{Id}}}
\newcommand{\shHom}{\underline{\operatorname{Hom}}}
\newcommand{\Symm}{\operatorname{Sym}}
\newcommand{\pr}{{\mathtt{pr}}}
\newcommand{\DR}{\mathtt{DR}}
\newcommand{\vac}{{\mathbf{1}}}
\newcommand\dual[1]{{#1}^{\vee}}
\newcommand{\ip}{{\langle\ ,\ \rangle}}
\newcommand{\coComm}{\operatorname{coComm}}
\newcommand{\MC}{\operatorname{MC}}
\newcommand{\Def}{\operatorname{Def}}
\newcommand{\Der}{{\mathtt{Der}}}
\newcommand{\op}{\mathtt{op}}
\DeclareMathOperator{\sgn}{sgn}
\newcommand{\dR}{\widehat{\mathrm{d}}_\mathrm{d\!\! R}}
\newcommand{\bc}{\mathtt{B}}
\renewcommand{\subsubsection}{\@startsection
{subsubsection}%
{2}%
{0mm}%
{-\baselineskip}%
{-0.5\baselineskip}%
{\normalfont\normalsize\bfseries }}%
\begin{document}

\title{Formality for algebroid stacks}

\author[P.Bressler]{Paul Bressler}
\address{Max-Planck-Institut f\"{u}r Mathematik,
Vivatsgasse 7, 53111 Bonn, Germany} \email{paul.bressler@gmail.com}

\author[A.Gorokhovsky]{Alexander Gorokhovsky}
\address{Department of Mathematics, UCB 395,
University of Colorado, Boulder, CO~80309-0395, USA} \email{Alexander.Gorokhovsky@colorado.edu}

\author[R.Nest]{Ryszard Nest}
\address{Department of Mathematics,
Copenhagen University, Universitetsparken 5, 2100 Copenhagen, Denmark}
 \email{rnest@math.ku.dk}

\author[B.Tsygan]{Boris Tsygan}
\address{Department of
Mathematics, Northwestern University, Evanston, IL 60208-2730, USA} \email{tsygan@math.northwestern.edu}

\begin{abstract}
We extend the formality theorem of M.~Kontsevich from deformations of the structure sheaf on a manifold to deformations
of gerbes.
\end{abstract}

\thanks{
A. Gorokhovsky was partially supported by NSF grant DMS-0400342. B. Tsygan was partially supported by NSF grant
DMS-0605030}
\maketitle

\section{Introduction}
In the fundamental paper \cite{K} M.~Kontsevich showed that  the set of equivalence classes of formal deformations the algebra of functions on a manifold is in one-to-one correspondence with the set of equivalence classes of formal Poisson structures on the manifold. This result was obtained as a corollary of the formality of the Hochschild complex of the algebra of functions on the manifold conjectured by M.~Kontsevich  (cf. \cite{K1}) and proven
in \cite{K}. Later proofs by a different method were given in \cite{T} and in \cite{DTT}.

In this paper we extend the formality theorem of M.~Kontsevich to deformations of
gerbes on smooth manifolds, using the method of \cite{DTT}. Let $X$ be a smooth manifold; we denote by $\mathcal{O}_X$  the sheaf of complex valued $C^\infty$ functions on $X$. For a twisted form $\mathcal{S}$ of $\mathcal{O}_X$ regarded as an algebroid stack (see Section
\ref{subsection: twisted forms}) we denote by  $[\mathcal{S}]_{dR} \in H^3_{dR}(X)$ the de Rham class of $\mathcal{S}$. The main result of this paper establishes an equivalence of $2$-groupoid valued functors of Artin $\mathbb{C}$-algebras between $\Def(\mathcal{S})$ (the formal deformation theory of $\mathcal{S}$, see \cite{BGNT1}) and the Deligne $2$-groupoid of Maurer-Cartan elements of $L_{\infty}$-algebra of multivector
fields on $X$ twisted by a closed three-form representing $[\mathcal{S}]_{dR}$:
\begin{thm1}\label{main thm}
Suppose that $\mathcal{S}$ is a twisted form of $\mathcal{O}_X$. Let $H$ be a closed $3$-form on $X$ which represents
$[\mathcal{S}]_{dR} \in H^3_{dR}(X)$. For any Artin algebra $R$ with maximal ideal $\mathfrak{m}_R$ there is an
equivalence of $2$-groupoids
\[
\MC^2(\mathfrak{s}(\mathcal{O}_X)_H\otimes\mathfrak{m}_R) \cong \Def(\mathcal{S})(R)
\]
natural in $R$.
\end{thm1}

Here, $\mathfrak{s}(\mathcal{O}_X)_H$ denotes the $L_{\infty}$-algebra of multivector fields with the trivial differential, the binary operation given by Schouten bracket, the ternary operation given by $H$ (see \ref{subsection: structures on multivectors}) and all other operations equal to zero. As a corollary  of this result  we  obtain that the  isomorphism classes of formal deformations of $\mathcal S$ are in a bijective correspondence with equivalence classes of the formal \emph{twisted Poisson structures} defined by P.~Severa
and A.~Weinstein in \cite{SW}.

The proof of the Theorem proceeds along the following lines. As a starting point we use the construction of the Differential Graded Lie Algebra (DGLA) controlling the deformations of $\mathcal{S}$. This construction was obtained in \cite{BGNT, BGNT1}. Next we construct a chain of $L_{\infty}$-quasi-isomorphisms between this DGLA and $\mathfrak{s}(\mathcal{O}_X)_H$, using the techniques of \cite{DTT}. Since $L_{\infty}$-quasi-isomorphisms induce equivalences of respective Deligne groupoids, the result follows.

The paper is organized as follows. Section \ref{prelim} contains the preliminary material on jets and deformations. Section \ref{defalgstack} describes the results on the deformations of algebroid stacks. Section \ref{review of DTT} is a short exposition of \cite{DTT}.
Section \ref{for} contains the main technical result of the paper: the construction of the chain of quasi-isomorphisms mentioned above. Finally, in Section \ref{deform} the main
theorem is deduced from the results of Section \ref{for}.

The paper was written while the first author was visiting Max-Plank-Institut f\"ur Mathematik, Bonn.

\section{Preliminaries}\label{prelim}
\subsection{Notations}
Throughout this  paper, unless specified otherwise, $X$ will denote a $C^{\infty}$ manifold. By $\mathcal{O}_X$ we
denote the sheaf of complex-valued $C^{\infty}$ functions on $X$. $\mathcal{A}^{\bullet}_X$ denotes the sheaf of
differential forms on $X$, and $\mathcal{T}_X$ the sheaf of vector fields on $X$. For a ring $K$ we denote by
$K^{\times}$ the group of invertible elements of $K$.
\subsection{Jets}\label{subsection: jets}
Let $\pr_i\colon X\times X\to X$, $i = 1,2$, denote the projection on the $i^{\text{th}}$ factor. Let $\Delta_X \colon
X\to X\times X$ denote the diagonal embedding. Let $\mathcal{I}_{X} := \ker(\Delta_X^*)$.

For a locally-free ${\mathcal O}_{X }$-module of finite rank ${\mathcal E}$ let
\begin{eqnarray*}
\mathcal{J}_{X }^k({\mathcal E}) & := &
(\pr_1)_*\left({\mathcal O}_{X\times X}/{\mathcal
I}_{X}^{k+1}\otimes_{\pr_2^{-1}{\mathcal
O}_{X}}\pr_2^{-1}{\mathcal E}\right) \ , \\
\mathcal{J}^k_{X}& := & \mathcal{J}_{X }^k(\mathcal{O}_{X }) \
.
\end{eqnarray*}
It is clear from the above definition that $\mathcal{J}^k_{X }$ is, in a natural way, a commutative algebra and
$\mathcal {J}_{X }^k({\mathcal E})$ is a $\mathcal {J}^k_{X}$-module.

Let

\[
\vac^{(k)} \colon  \mathcal{O}_{X }\to \mathcal{J}^k_{X }
\]
denote the composition
\[
\mathcal{O}_{X } \xrightarrow{\pr_1^*} (\pr_1)_*\mathcal{O}_{X\times
X  } \to \mathcal{J}^k_{X }
\]
In what follows, unless stated explicitly otherwise, we regard $\mathcal{J}_{X }^k({\mathcal E})$ as a $\mathcal
{O}_{X}$-module via the map $\vac^{(k)}$.

Let
\[
j^k\colon  \mathcal{E} \to \mathcal{J}_{X }^k(\mathcal{E})
\]
denote the composition
\[
\mathcal{E} \xrightarrow{e\mapsto 1\otimes e}
(\pr_1)_*\mathcal{O}_{X\times X  } \otimes_\mathbb{C} \mathcal{E}
\to \mathcal{J}_{X }^k(\mathcal{E})
\]
The map $j^k$ is not $\mathcal{O}_{X }$-linear unless $k=0$.

For $0\leq k\leq l$ the inclusion ${\mathcal I}_{X }^{l+1}\to{\mathcal I}_{X}^{k+1}$ induces the surjective map
$\pi_{l,k}\colon {\mathcal J}^l_{X }({\mathcal E}) \to {\mathcal J}^k_{X }({\mathcal E})$. The sheaves ${\mathcal
J}^k_{X }({\mathcal E})$, $k=0,1,\ldots$ together with the maps $\pi_{l,k}$, $k\leq l$ form an inverse system. Let
${\mathcal J}_{X }({\mathcal E}) = {\mathcal J}^\infty_{X }({\mathcal E}) :=
\varprojlim {\mathcal J}^k_{X }({\mathcal E})$. Thus, ${\mathcal J}_{X }({\mathcal E})$ carries a
natural topology.

The maps $\vac^{(k)}$ (respectively, $j^k$), $k=0,1,2,\ldots$ are compatible with the projections $\pi_{l,k}$, i.e.
$\pi_{l,k}\circ\vac^{(l)} = \vac^{(k)}$ (respectively, $\pi_{l,k}\circ j^l = j^k$). Let $\vac := \varprojlim
\vac^{(k)}$,
 $j^\infty := \varprojlim j^k$.

Let
\begin{multline*}
d_1 \colon  {\mathcal O}_{{X\times X} }\otimes_{\pr_2^{-1}{\mathcal O}_{X}}\pr_2^{-1}{\mathcal E} \longrightarrow \\
\pr_1^{-1}\mathcal{A}^1_{X}\otimes_{\pr_1^{-1}{\mathcal O}_{X}}{\mathcal O}_{{X\times X} }\otimes_{\pr_2^{-1}{\mathcal
O}_{X}}\pr_2^{-1}{\mathcal E}
\end{multline*}
denote the exterior derivative along the first factor. It satisfies
\[
d_1({\mathcal I}_{X}^{k+1}\otimes_{\pr_2^{-1}{\mathcal
O}_{X}}\pr_2^{-1}{\mathcal E})\subset
\pr_1^{-1}\mathcal{A}^1_X\otimes_{\pr_1^{-1}{\mathcal O}_{X}}{\mathcal
I}_{X}^k\otimes_{\pr_2^{-1}{\mathcal O}_{X}}\pr_2^{-1}{\mathcal E}
\]
for each $k$ and, therefore, induces the map
\[
d_1^{(k)} \colon  {\mathcal J}^k({\mathcal
E})\to\mathcal{A}^1_{X}\otimes_{{\mathcal O}_{X}}{\mathcal
J}^{k-1}({\mathcal E})
\]
The maps $d_1^{(k)}$ for different values of $k$ are compatible with the maps $\pi_{l,k}$ giving rise to the
\emph{canonical flat connection}
\[
\nabla^{can} \colon  {\mathcal J}_{X}({\mathcal
E})\to\mathcal{A}^1_{X}\otimes_{{\mathcal O}_{X}}{\mathcal
J}_{X}({\mathcal E}) \ .
\]

\subsection{Deligne groupoids}
In \cite{Del} P.~Deligne and, independently, E.~Getzler in \cite{G2} associated to a nilpotent DGLA $\mathfrak{g}$ concentrated in degrees grater than or equal to $-1$ the $2$-groupoid, referred to as \emph{the Deligne $2$-groupoid} and denoted $\MC^2(\mathfrak{g})$ in \cite{BGNT}, \cite{BGNT1} and below. The objects of $\MC^2(\mathfrak{g})$ are the Maurer-Cartan elements of $\mathfrak{g}$. We refer the reader to \cite{G2} (as well as to \cite{BGNT1}) for a detailed description. The above notion was extended and generalized by E.~Getzler in \cite{G1} as follows.

To a nilpotent $L_{\infty}$-algebra $\mathfrak{g}$ Getzler associates a (Kan) simplicial set $\gamma_{\bullet}(\mathfrak{g})$ which is functorial for $L_\infty$ morphisms. If $\mathfrak{g}$ is concentrated in degrees greater than or equal to $1-l$, then the simplicial set $\gamma_{\bullet}(\mathfrak{g})$ is an $l$-dimensional hypergroupoid in the sense of J.W.~Duskin (see \cite{duskin}) by \cite{G1}, Theorem 5.4.

Suppose that $\mathfrak{g}$ is a nilpotent $L_{\infty}$-algebra concentrated in degrees grater than or equal to $-1$. Then, according to \cite{duskin}, Theorem 8.6 the simplicial set $\gamma_{\bullet}(\mathfrak{g})$ is the nerve of a bigroupoid, or, a $2$-groupoid in our terminology. If $\mathfrak{g}$ is a DGLA concentrated in degrees grater than or equal to $-1$ this $2$-groupoid coincides with $\MC^2(\mathfrak{g})$ of Deligne and Getzler alluded to earlier. We extend our notation to the more general setting of nilpotent $L_{\infty}$-algebras as above and denote by $\MC^2(\mathfrak{g})$ the $2$-groupoid furnished by \cite{duskin}, Theorem 8.6.

For an $L_{\infty}$-algebra $\mathfrak{g}$ and a nilpotent commutative algebra $\mathfrak{m}$ the $L_{\infty}$-algebra $\mathfrak{g}\otimes\mathfrak{m}$ is nilpotent, hence the simplicial set $\gamma_{\bullet}(\mathfrak{g}\otimes \mathfrak{m})$ is defined and enjoys the following homotopy invariance property (\cite{G1}, Proposition 4.9, Corollary 5.11):

\begin{thm}\label{thm: quism invariance of mc}
Suppose that $f \colon \mathfrak{g}\to\mathfrak{h}$ is a
quasi-isomorphism of $L_{\infty}$ algebras and  let $\mathfrak{m}$
be a nilpotent commutative algebra. Then the induced map
\[
\gamma_{\bullet}(f\otimes \id) \colon
\gamma_{\bullet}(\mathfrak{g}\otimes \mathfrak{m}) \to
\gamma_{\bullet}(\mathfrak{h}\otimes \mathfrak{m})
\]
is a homotopy equivalence.
\end{thm}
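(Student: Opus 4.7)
The plan is to reduce the theorem to the case of abelian $L_\infty$-algebras, i.e.\ cochain complexes, by filtering the coefficients $\mathfrak{m}$ by powers of its maximal ideal, and then to appeal to the Dold--Kan correspondence.

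Recall first that, for any nilpotent $L_\infty$-algebra $\mathfrak{L}$, $\gamma_n(\mathfrak{L})$ consists of the Maurer--Cartan elements of $\mathfrak{L}\otimes\Omega^\bullet(\Delta^n)$ satisfying Dupont's gauge-fixing condition, with the nilpotence of $\mathfrak{L}$ ensuring convergence of the Maurer--Cartan equation. Now filter $\mathfrak{m}$ by its powers, $\mathfrak{m}=\mathfrak{m}^1 \supset \mathfrak{m}^2 \supset \cdots \supset \mathfrak{m}^N=0$, and set $F^k := \mathfrak{g}\otimes\mathfrak{m}^k$, $G^k := \mathfrak{h}\otimes\mathfrak{m}^k$; both are decreasing filtrations by nilpotent $L_\infty$-ideals. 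Their successive quotients are abelian as $L_\infty$-algebras: all brackets with two or more arguments vanish on $F^k/F^{k+1} \cong \mathfrak{g}\otimes(\mathfrak{m}^k/\mathfrak{m}^{k+1})$ because each such bracket involves a product of at least two elements of $\mathfrak{m}$, and the product of any element of $\mathfrak{m}$ with one of $\mathfrak{m}^k$ lies in $\mathfrak{m}^{k+1}$.

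The central technical input is that each short exact sequence
\[
0 \to F^{k+1} \to F^k \to F^k/F^{k+1} \to 0
\]
(and likewise for $G^{\bullet}$) induces a Kan fibration $\gamma_\bullet(F^k) \to \gamma_\bullet(F^k/F^{k+1})$, established by obstruction theory: horns are lifted stage by stage, the obstructions lying in $\gamma_\bullet(F^{k+1})$ and being killed by its nilpotence. On an abelian $L_\infty$-algebra, i.e.\ a cochain complex $V^{\bullet}$, $\gamma_\bullet(V^{\bullet})$ is naturally identified via Dupont's contraction of $\Omega^\bullet(\Delta^n)$ with the Dold--Kan nerve of a suitable truncation of $V^{\bullet}$, so that a quasi-isomorphism of cochain complexes produces a weak equivalence. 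Since $\mathfrak{m}^k/\mathfrak{m}^{k+1}$ is a $\mathbb{C}$-vector space, tensoring preserves quasi-isomorphisms, and so $f\otimes\id$ induces a quasi-isomorphism, hence a weak equivalence on $\gamma_\bullet$, on each associated-graded piece.

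Descending induction on $k$ now closes the argument: assuming that $\gamma_\bullet(f\otimes\id)$ is a weak equivalence when restricted to the $F^{k+1}$, $G^{k+1}$ subalgebras, the five-lemma applied to the long exact sequences of homotopy groups for the two Kan fibrations, together with the weak equivalence on fibers, upgrades this to a weak equivalence for $F^k$, $G^k$; the case $k=1$ is the theorem. The principal obstacle is verifying that these short exact sequences actually yield Kan fibrations and identifying $\gamma_\bullet$ of a cochain complex with a Dold--Kan-type nerve --- both of which rely on Dupont's explicit simplicial contraction of $\Omega^\bullet(\Delta^n)$ and careful horn-filling combinatorics, and constitute the technical heart of the Getzler results cited in the statement.
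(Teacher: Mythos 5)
The paper does not actually prove this statement: it is imported from Getzler \cite{G1} (Proposition 4.9 and Corollary 5.11), and your outline --- filtering $\mathfrak{g}\otimes\mathfrak{m}$ by the powers of $\mathfrak{m}$, identifying $\gamma_\bullet$ of the abelian associated graded pieces with a Dold--Kan type nerve via Dupont's contraction, and climbing back up the resulting tower of Kan fibrations --- is essentially the argument given there. The only points you elide that deserve care are that the fibre of $\gamma_\bullet(F^k)\to\gamma_\bullet(F^k/F^{k+1})$ over a given Maurer--Cartan element is $\gamma_\bullet$ of a \emph{twist} of $F^{k+1}$ by a lift of that element (not of $F^{k+1}$ itself), and that the five-lemma step only controls individual components, so it must be supplemented by an obstruction-theoretic comparison of the images on $\pi_0$; both points are handled in the cited source.
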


\subsection{Algebroid stacks}\label{subsection: algebroid stacks}
Here we give a very brief overview, referring the reader to \cite{DAP, KS} for the details. Let $k$ be a field of
characteristic zero, and let $R$ be a commutative $k$-algebra.
\begin{definition}
A stack in $R$-linear categories $\mathcal{C}$ on $X$ is an \emph{$R$-algebroid stack} if it is locally nonempty and
locally connected, i.e. satisfies
\begin{enumerate}
\item any point $x\in X$ has a neighborhood $U$ such that $\mathcal{C }(U)$ is nonempty;

\item for any $U\subseteq X$, $x\in U$, $A, B\in\mathcal{C}(U)$ there exits a neighborhood $V\subseteq U$ of $x$
    and an isomorphism $A\vert_V\cong B\vert_V$.
\end{enumerate}
\end{definition}

For a prestack $\mathcal{C}$ we denote by $\widetilde{\mathcal{C}}$ the associated stack.

For a category $C$  denote by $iC$ the subcategory of isomorphisms in $C$; equivalently, $iC$ is the maximal
subgroupoid in $C$. If $\mathcal{C}$ is an algebroid stack then  the stack associated to the substack of isomorphisms
$\widetilde{i\mathcal{C}}$ is a gerbe.

 For an algebra $K$ we denote by $K^+$ the linear category with a single object whose endomorphism algebra is $K$.
For a sheaf of algebras $\mathcal{K}$ on $X$ we denote by $\mathcal{K}^+$ the prestack in linear categories given by $U
\mapsto \mathcal{K}(U)^+$. Let $\widetilde{\mathcal{K}^+}$ denote the associated stack. Then,
$\widetilde{\mathcal{K}^+}$ is an algebroid stack equivalent to the stack of locally free $\mathcal{K}^\op$-modules of
rank one.


By a \emph{twisted form of $\mathcal{K}$} we mean an
algebroid stack locally equivalent to $\widetilde{\mathcal{K}^+}$. It is easy to see that the equivalence classes of
twisted forms of $\mathcal{K}$ are bijective correspondence with $H^2(X;\mathtt{Z}(\mathcal{K})^\times)$, where
$\mathtt{Z}(\mathcal{K})$ denotes the center of $\mathcal{K}$.

\subsection{Twisted forms of $\mathcal{O}$}\label{subsection: twisted forms}

Twisted forms of $\mathcal{O}_X$ are in bijective correspondence with $\mathcal{O}_X^\times$-gerbes: if $\mathcal{S}$
is a twisted form of $\mathcal{O}_X$, the corresponding gerbe is the substack $i\mathcal{S}$ of isomorphisms in
$\mathcal{S}$. We shall not make a distinction between the two notions.

The equivalence classes of twisted forms of $\mathcal{O}_X$ are in bijective correspondence with
$H^2(X;\mathcal{O}_X^\times)$. The composition
\[
\mathcal{O}_X^\times \to \mathcal{O}_X^\times / \mathbb{C}^\times \xrightarrow{\log} \mathcal{O}_X/\mathbb{C} \xrightarrow{j^\infty} \DR(\mathcal{J}_X/\mathcal{O}_X)
\]
induces the map $H^2(X;\mathcal{O}_X^\times) \to H^2(X;\DR(\mathcal{J}_X/\mathcal{O}_X)) \cong
H^2(\Gamma(X;\mathcal{A}^\bullet_X \otimes \mathcal{J}_X/\mathcal{O}_X), \nabla^{can})$. We denote by $[\mathcal{S}]$
the image in the latter space of the class of $\mathcal{S}$.

The short exact sequence
\[
0 \to \mathcal{O}_X \xrightarrow{\vac} \mathcal{J}_X \to \mathcal{J}_X/\mathcal{O}_X \to 0
\]
gives rise to the short exact sequence of complexes
\[
0 \to \Gamma(X;\mathcal{A}^\bullet_X) \to \Gamma(X;\DR(\mathcal{J}_X)) \to \Gamma(X; \DR(\mathcal{J}_X/\mathcal{O}_X)) \to 0 ,
\]
hence to the map (connecting homomorphism) $H^2(X;\DR(\mathcal{J}_X/\mathcal{O}_X)) \to H^3_{dR}(X)$. Namely, if $B \in
\Gamma(X; \mathcal{A}^2_X\otimes\mathcal{J}_X)$ maps to $\overline{B} \in \Gamma(X;
\mathcal{A}^2_X\otimes\mathcal{J}_X/\mathcal{O}_X)$ which represents $[\mathcal{S}]$, then there exists a unique $H \in
\Gamma(X; \mathcal{A}^3)$ such that $\nabla^{can}B = \DR(\vac)(H)$. The form $H$ is closed and represents the image of
the class of $\overline{B}$ under the connecting homomorphism.

\begin{notation}
We denote by $[\mathcal{S}]_{dR}$ the image of $[\mathcal{S}]$ under the map \[H^2(X;\DR(\mathcal{J}_X/\mathcal{O}_X))
\to H^3_{dR}(X). \]
\end{notation}

\section{Deformations of algebroid stacks}\label{defalgstack}

\subsection{Deformations of linear stacks}
Here we describe the notion of $2$-groupoid of deformations   of an algebroid stack. We follow \cite{BGNT1} and refer
the reader to that paper for   all the  proofs and   additional details.

 For an $R$-linear category $\mathcal{C}$ and homomorphism of algebras $R\to S$ we denote by
$\mathcal{C}\otimes_R S$ the category with the same objects as $\mathcal{C}$ and morphisms defined by
$\Hom_{\mathcal{C}\otimes_R S}(A,B) = \Hom_\mathcal{C}(A,B)\otimes_R S$.

For a prestack $\mathcal{C}$ in $R$-linear categories we denote by $\mathcal{C}\otimes_R S$ the prestack associated to
the fibered category $U\mapsto\mathcal{C}(U)\otimes_R S$.

\begin{lemma}[\cite{BGNT1}, Lemma 4.13]
Suppose that $\mathcal{A}$ is a sheaf of $R$-algebras and $\mathcal{C}$ is an $R$-algebroid stack. Then
$\widetilde{\mathcal{C}\otimes_R S}$ is an algebroid stack.
\end{lemma}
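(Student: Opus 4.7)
The plan is to unpack the two defining conditions for an algebroid stack (local nonemptiness and local connectedness) and verify them for $\widetilde{\mathcal{C}\otimes_R S}$, using the corresponding properties of $\mathcal{C}$ together with the definition of $\otimes_R S$ at the level of prestacks.

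First I would verify local nonemptiness. Since $\mathcal{C}$ is an $R$-algebroid stack, any $x\in X$ has a neighborhood $U$ with $\mathcal{C}(U)\neq\emptyset$. By the very definition of $\mathcal{C}\otimes_R S$ on objects of a prestack, the prestack $\mathcal{C}\otimes_R S$ has the same objects over $U$, so $(\mathcal{C}\otimes_R S)(U)$ is nonempty, and hence so is $\widetilde{\mathcal{C}\otimes_R S}(U)$ since the stackification functor is fully faithful enough to preserve local objects.

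Next I would verify local connectedness. Let $U\subseteq X$ and $A,B\in\widetilde{\mathcal{C}\otimes_R S}(U)$. Because the associated stack construction is built from local sections of the underlying prestack, after passing to a cover $\{U_i\}$ of $U$ we may assume that the restrictions $A|_{U_i},B|_{U_i}$ lie in the image of $(\mathcal{C}\otimes_R S)(U_i)$, hence come from objects $A_i,B_i \in \mathcal{C}(U_i)$. Using that $\mathcal{C}$ is locally connected, shrink $U_i$ to a neighborhood $V_i\subseteq U_i$ of a given point so that there exists an isomorphism $f_i\colon A_i|_{V_i}\xrightarrow{\sim} B_i|_{V_i}$ in $\mathcal{C}(V_i)$. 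Then $f_i\otimes 1$ is a morphism in $(\mathcal{C}\otimes_R S)(V_i)$, and it is an isomorphism because $f_i^{-1}\otimes 1$ is a two-sided inverse, by functoriality of the assignment $\Hom_\mathcal{C}(-,-)\mapsto \Hom_\mathcal{C}(-,-)\otimes_R S$. This supplies the required local isomorphism between $A$ and $B$ in $\widetilde{\mathcal{C}\otimes_R S}$.

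Finally I would note that $R$-linearity of $\mathcal{C}$ induces canonically an $S$-linear structure on $\mathcal{C}\otimes_R S$ and its stackification, so $\widetilde{\mathcal{C}\otimes_R S}$ is naturally a stack in $S$-linear categories satisfying both axioms, i.e.\ an $S$-algebroid stack. The only point that requires genuine care—and which I expect to be the main obstacle—is the passage from objects and morphisms in the stackification $\widetilde{\mathcal{C}\otimes_R S}$ back to the prestack $\mathcal{C}\otimes_R S$: one must justify that, after sufficient refinement, objects and isomorphisms in the stack indeed come from the prestack level, so that the local properties of $\mathcal{C}$ can be applied. This is a standard property of the associated stack functor, but it is where the argument is not purely formal.
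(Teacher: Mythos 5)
Your argument is correct and is the standard one: the paper itself gives no proof of this lemma (it is quoted from \cite{BGNT1}, to which the reader is referred for all proofs), so there is nothing in the present text to compare against, but your verification of local nonemptiness and local connectedness for $\widetilde{\mathcal{C}\otimes_R S}$ via the corresponding properties of $\mathcal{C}$ is exactly what is needed. The two points you rightly isolate as requiring care --- that objects and isomorphisms of the associated stack are, after refinement, induced from the prestack $\mathcal{C}\otimes_R S$, whose objects over $U$ coincide with those of $\mathcal{C}(U)$, and that $f\otimes 1$ is invertible with inverse $f^{-1}\otimes 1$ --- are indeed the only nontrivial ingredients, and both hold; only your aside that stackification is ``fully faithful enough'' is a red herring for nonemptiness, where the mere existence of the canonical functor $(\mathcal{C}\otimes_R S)(U)\to\widetilde{\mathcal{C}\otimes_R S}(U)$ suffices.
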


 \
Suppose now that $\mathcal{C}$ is a stack in $k$-linear categories on $X$ and $R$ is a commutative Artin $k$-algebra.
We denote by $\Def(\mathcal{C})(R)$ the $2$-category with
\begin{itemize}
\item objects: pairs $(\mathcal{B}, \varpi)$, where $\mathcal{B}$ is a stack in $R$-linear categories flat over $R$
    and $\varpi : \widetilde{\mathcal{B}\otimes_R k} \to \mathcal{C}$ is an equivalence of stacks in $k$-linear
    categories

\item $1$-morphisms: a $1$-morphism $(\mathcal{B}^{(1)}, \varpi^{(1)})\to (\mathcal{B}^{(2)}, \varpi^{(2)})$ is a
    pair $(F,\theta)$ where $F : \mathcal{B}^{(1)}\to \mathcal{B}^{(2)}$ is a $R$-linear functor and $\theta :
    \varpi^{(2)}\circ (F\otimes_R k) \to \varpi^{(1)}$ is an isomorphism of functors

\item $2$-morphisms: a $2$-morphism $(F',\theta')\to (F'',\theta'')$ is a morphism of $R$-linear functors $\kappa :
    F'\to F''$ such that $\theta''\circ(\id_{\varpi^{(2)}}\otimes(\kappa\otimes_R k)) = \theta'$
\end{itemize}

The $2$-category $\Def(\mathcal{C})(R)$ is a $2$-groupoid.

Let   $\mathcal{B}$ be a prestack on $X$ in $R$-linear categories. We say that  $\mathcal{B}$ is  \emph{flat} if for
any $U\subseteq X$, $A,B\in\mathcal{B}(U)$ the sheaf $\shHom_\mathcal{B}(A,B)$ is flat (as a sheaf of $R$-modules).
\begin{lemma}[\cite{BGNT1}, Lemma 6.2]\label{lemma: def of algd is algd}
Suppose that $\mathcal{B}$ is a flat $R$-linear stack on $X$ such that $\widetilde{\mathcal{B}\otimes_R k}$ is an
algebroid stack. Then $\mathcal{B}$ is an algebroid stack.
\end{lemma}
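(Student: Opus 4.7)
The plan is to verify directly the two defining conditions of an algebroid stack for $\mathcal{B}$: local nonemptiness and local connectedness. Both are reduced to the corresponding properties of $\widetilde{\mathcal{B}\otimes_R k}$ (which hold by hypothesis) by exploiting flatness of the hom-sheaves over $R$ together with nilpotence of $\mathfrak{m}_R$.

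For local nonemptiness, fix $x \in X$. Since $\widetilde{\mathcal{B}\otimes_R k}$ is locally nonempty, some neighborhood $U \ni x$ carries an object $\overline{A}$. By construction of the associated stack, after shrinking to a smaller $V \ni x$ we may assume that $\overline{A}|_V$ is represented by an object of the prestack $(\mathcal{B}\otimes_R k)(V)$; but $\mathcal{B}\otimes_R k$ has by definition the same objects as $\mathcal{B}$, so $\mathcal{B}(V)$ is nonempty.

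For local connectedness, let $A, B \in \mathcal{B}(U)$ and $x \in U$. The images of $A$ and $B$ in $\widetilde{\mathcal{B}\otimes_R k}$ become isomorphic over some neighborhood $W \ni x$ via an isomorphism $\overline{\phi}$. After shrinking $W$, $\overline\phi$ is represented by a section of $\shHom_{\mathcal{B}}(A,B) \otimes_R k$. Flatness of $\shHom_{\mathcal{B}}(A,B)$ makes the sequence
\[
0 \to \mathfrak{m}_R \cdot \shHom_{\mathcal{B}}(A,B) \to \shHom_{\mathcal{B}}(A,B) \to \shHom_{\mathcal{B}}(A,B)\otimes_R k \to 0
\]
exact, so its right-hand arrow is surjective on stalks. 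Shrinking to a small enough $V \ni x$ then lets us lift $\overline{\phi}|_V$ to $\phi \in \Hom_{\mathcal{B}(V)}(A|_V, B|_V)$ and $\overline{\phi}^{-1}|_V$ to $\psi \in \Hom_{\mathcal{B}(V)}(B|_V, A|_V)$. The differences $\phi\psi - \id_B$ and $\psi\phi - \id_A$ lie in $\mathfrak{m}_R$ times the relevant endomorphism modules, and since $\mathfrak{m}_R^N = 0$ they are nilpotent. Hence $\phi\psi$ and $\psi\phi$ are invertible, which exhibits $\phi$ as an isomorphism $A|_V \isomo B|_V$.

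The main subtlety is the passage between the stack $\widetilde{\mathcal{B}\otimes_R k}$ and the prestack $\mathcal{B}\otimes_R k$: one must use both that objects of the stackification locally descend to the prestack, and that isomorphisms of the stackification between objects of the prestack are locally represented by morphisms in $\mathcal{B}\otimes_R k$. Once this bookkeeping is in place, the argument reduces to two standard deformation-theoretic moves — flat lifting of a morphism across a nilpotent extension, and inverting $1+n$ for $n$ nilpotent.
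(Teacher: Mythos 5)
Your argument is correct and is essentially the argument of \cite{BGNT1}, Lemma 6.2, which the present paper cites without reproducing: reduce both axioms for an algebroid stack to the special fibre, lift morphisms through the nilpotent maximal ideal, and invert $\id$ plus a nilpotent. The only cosmetic point is that surjectivity of $\shHom_{\mathcal{B}}(A,B)\to\shHom_{\mathcal{B}}(A,B)\otimes_R k$ follows from right-exactness of the tensor product alone (flatness identifies the kernel with $\mathfrak{m}_R\cdot\shHom_{\mathcal{B}}(A,B)$, which is what your nilpotence step actually uses), so your appeal to flatness there is harmless but not where the hypothesis does its work.
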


\subsection{Deformations of twisted forms of $\mathcal{O}$}\label{subsection: deformations of twisted forms}
Suppose that $\mathcal{S}$ is a twisted form of $\mathcal{O}_X$. We will now describe the DGLA controlling the
deformations of $\mathcal{S}$.




The complex $\Gamma(X;\DR(C^\bullet(\mathcal{J}_X))
= (\Gamma(X;\mathcal{A}^\bullet_X \otimes C^\bullet(\mathcal{J}_X)), \nabla^{can} + \delta)$ is a differential graded
brace algebra in a canonical way. The abelian Lie algebra $\mathcal{J}_X = C^0(\mathcal{J}_X)$ acts on the brace
algebra $C^\bullet(\mathcal{J}_X)$ by derivations of degree $-1$ by Gerstenhaber bracket. The above action factors
through an action of $\mathcal{J}_X/\mathcal{O}_X$. Therefore, the abelian Lie algebra
$\Gamma(X;\mathcal{A}^2_X\otimes\mathcal{J}_X/\mathcal{O}_X)$ acts on the brace algebra $\mathcal{A}^\bullet_X \otimes
C^\bullet(\mathcal{J}_X)$ by derivations of degree $+1$. Following longstanding tradition, the action of an element $a$
is denoted by $i_a$.

Due to commutativity of $\mathcal{J}_X$, for any $\omega\in
\Gamma(X;\mathcal{A}^2_X\otimes\mathcal{J}_X/\mathcal{O}_X)$ the operation $\iota_\omega$ commutes with the Hochschild
differential $\delta$. If, moreover, $\omega$ satisfies $\nabla^{can}\omega = 0$, then $\nabla^{can} + \delta +
i_\omega$ is a square-zero derivation of degree one of the brace structure. We refer to the complex
\[
\Gamma(X;\DR(C^\bullet(\mathcal{J}_X))_\omega := (\Gamma(X;\mathcal{A}^\bullet_X \otimes C^\bullet(\mathcal{J}_X)), \nabla^{can} + \delta + i_\omega)
\]
as the \emph{$\omega$-twist} of $\Gamma(X;\DR(C^\bullet(\mathcal{J}_X))$.

Let
\[
\mathfrak{g}_\DR(\mathcal{J})_\omega :=  \Gamma(X;\DR(C^\bullet(\mathcal{J}_X))[1])_\omega
\]
regarded as a DGLA.  The following theorem is proved in \cite{BGNT1} (Theorem 1 of loc. cit.):

\begin{thm}\label{cor: Def is MC jets}
For any Artin algebra $R$ with maximal ideal $\mathfrak{m}_R$ there is an equivalence of $2$-groupoids
\[
\MC^2(\mathfrak{g}_\DR(\mathcal{J}_X)_\omega\otimes\mathfrak{m}_R) \cong \Def(\mathcal{S})(R)
\]
natural in $R$.
\end{thm}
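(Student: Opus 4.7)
The plan is to compare the two 2-groupoids in three stages: first prove the theorem locally for the untwisted model $\widetilde{\mathcal{O}^+}$, then globalize by replacing $\mathcal{O}_X$ with jets and using the canonical flat connection, and finally twist by $\omega$ to incorporate the gerbe data of $\mathcal{S}$.

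First, I would handle the local, untwisted case. Choose a cover $\{U_i\}$ on which $\mathcal{S}|_{U_i}\simeq \widetilde{\mathcal{O}_{U_i}^+}$. For the model algebroid stack $\widetilde{\mathcal{O}_U^+}$, the 2-groupoid $\Def(\widetilde{\mathcal{O}_U^+})(R)$ is identified with $\MC^2(C^\bullet(\mathcal{O}_U)[1]\otimes \mathfrak{m}_R)$ by the standard dictionary: objects correspond to star products (Maurer-Cartan elements of the Hochschild DGLA), $1$-morphisms to gauge equivalences, and $2$-morphisms to their automorphisms. Replacing functions by jets via $j^\infty\colon \mathcal{O}_X\to \mathcal{J}_X$, the Poincar\'e lemma for $\nabla^{can}$ provides a quasi-isomorphism of sheaves of DGLAs $C^\bullet(\mathcal{O}_X)[1]\to \DR(C^\bullet(\mathcal{J}_X))[1]$; Theorem \ref{thm: quism invariance of mc} then promotes this to an equivalence of $\MC^2$, proving the desired result in the special case $\mathcal{S}=\widetilde{\mathcal{O}_X^+}$, $\omega=0$.

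Next, I would introduce the twist. The class of $\mathcal{S}$ in $H^2(X;\mathcal{O}_X^\times)$ is transported by $\log\circ j^\infty$ to a class represented by $\omega\in\Gamma(X;\mathcal{A}^2_X\otimes \mathcal{J}_X/\mathcal{O}_X)$. The abelian Lie algebra $\mathcal{J}_X/\mathcal{O}_X$ acts on the brace algebra $C^\bullet(\mathcal{J}_X)$ by contractions $i_a$, and this action is exactly the infinitesimal version of the automorphism by which a local deformation of $\widetilde{\mathcal{O}^+}$ must be adjusted before it can be reglued into a deformation of $\mathcal{S}$. Thus adding $i_\omega$ to $\nabla^{can}+\delta$ encodes the obstruction to the global triviality of $\mathcal{S}$. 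Locally one can write $\omega = \nabla^{can}\eta$, and conjugation by $\exp(i_\eta)$ identifies the twisted and untwisted DGLAs; this shows that the local equivalence of 2-groupoids from the first stage extends to the twisted setting.

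Finally, I would globalize by descent. A deformation of $\mathcal{S}$ is, up to refinement of cover, a compatible family of local deformations of $\widetilde{\mathcal{O}^+}$ together with $1$-isomorphisms on double overlaps and $2$-isomorphisms on triple overlaps; under the local identifications $\exp(i_{\eta_i})$ these assemble into precisely the simplices of $\gamma_\bullet(\mathfrak{g}_\DR(\mathcal{J}_X)_\omega\otimes \mathfrak{m}_R)$. Naturality in $R$ follows from the functoriality of every step. The main obstacle is this descent stage: one must match the $2$-categorical coherence of a stack deformation with the simplicial coherence of $\gamma_\bullet$ so that the result is independent of all choices (cover, trivializations $\eta_i$, representative $\omega$). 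The hypergroupoid characterization of $\gamma_\bullet$ from \cite{duskin, G1} is what makes this matching possible, and organizing the \v{C}ech-jet bookkeeping to respect it is the technical heart of the argument.
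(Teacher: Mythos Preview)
The paper does not prove this statement: it is quoted verbatim as Theorem~1 of \cite{BGNT1} and no argument is given here. So there is no in-paper proof to compare your proposal against; the present paper treats this result as input.

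That said, your three-stage outline (local untwisted case, twist, \v Cech descent) is broadly the shape of the argument actually carried out in \cite{BGNT1}, and you are right that the descent bookkeeping is the technical core. One point deserves more care than you give it: in your first stage you invoke ``the standard dictionary'' to identify $\Def(\widetilde{\mathcal{O}_U^+})(R)$ with $\MC^2(C^\bullet(\mathcal{O}_U)[1]\otimes\mathfrak{m}_R)$, but the left-hand side parametrizes deformations of the \emph{stack}, not of the algebra, and these are a priori richer (a flat $R$-linear stack reducing to $\widetilde{\mathcal{O}_U^+}$ need not be of the form $\widetilde{A^+}$ for an algebra $A$). Showing that locally every such stack deformation does come from an algebra deformation, and that the resulting map of $2$-groupoids is an equivalence, is itself a lemma requiring the softness of the relevant sheaves; it is not the classical star-product dictionary. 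Similarly, your sentence ``Theorem~\ref{thm: quism invariance of mc} then promotes this to an equivalence of $\MC^2$, proving the desired result in the special case $\mathcal{S}=\widetilde{\mathcal{O}_X^+}$'' silently assumes the global identification $\Def(\widetilde{\mathcal{O}_X^+})(R)\cong\MC^2(\Gamma(X;C^\bullet(\mathcal{O}_X)[1])\otimes\mathfrak{m}_R)$, which is precisely what the descent argument of your third stage is supposed to supply; as written, the logic of stage one is circular. These are organizational rather than fatal issues, but they are exactly the places where \cite{BGNT1} has to do real work.
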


\section{Formality}\label{review of DTT}
We give a synopsis of the results of \cite{DTT} in the notations of loc. cit. Let $k$ be a field of characteristic
zero. For a $k$-cooperad $\mathcal{C}$ and a complex of $k$-vector spaces $V$ we denote by $\mathbb{F}_\mathcal{C}(V)$
the cofree $\mathcal{C}$-coalgebra on $V$.

We denote by $\mathbf{e_2}$ the operad governing Gerstenhaber algebras. The latter is Koszul, and we denote by
$\dual{\mathbf{e_2}}$ the dual cooperad.

For an associative $k$-algebra $A$ the Hochschild complex $C^\bullet(A)$ has a canonical structure of a brace algebra,
hence a structure of homotopy $\mathbf{e_2}$-algebra. The latter structure is encoded in a differential (i.e. a
coderivation of degree one and square zero) $M \colon  \mathbb{F}_{\dual{\mathbf{e_2}}} (C^\bullet(A)) \to
\mathbb{F}_{\dual{\mathbf{e_2}}} (C^\bullet(A))[1]$.

Suppose from now on that $A$ is regular commutative algebra over a field of characteristic zero (the regularity
assumption is not needed for the constructions). Let $V^\bullet(A) = \Symm^\bullet_A(\Der(A)[-1])$ viewed as a complex
with trivial differential. In this capacity $V^\bullet(A)$ has a canonical structure of an $\mathbf{e_2}$-algebra which
gives rise to the differential $d_{V^\bullet(A)}$ on $\mathbb{F}_{\dual{\mathbf{e_2}}} (V^\bullet(A))$; we have:
$\bc_{\dual{\mathbf{e_2}}} (V^\bullet(A)) = (\mathbb{F}_{\dual{\mathbf{e_2}}} (V^\bullet(A)), d_{V^\bullet(A)})$ (see
\cite{DTT}, Theorem 1 for notations).

In addition, the authors introduce a sub-$\dual{\mathbf{e_2}}$-coalgebra $\Xi(A)$ of both
$\mathbb{F}_{\dual{\mathbf{e_2}}}(C^\bullet(A))$ and $\mathbb{F}_{\dual{\mathbf{e_2}}} (V^\bullet(A))$. We denote by
$\sigma \colon \Xi(A) \to \mathbb{F}_{\dual{\mathbf{e_2}}}(C^\bullet(A))$ and $\iota \colon \Xi(A) \to
\mathbb{F}_{\dual{\mathbf{e_2}}}(V^\bullet(A))$ respective inclusions and identify $\Xi(A)$ with its image under the
latter one. By \cite{DTT}, Proposition 7 the differential $d_{V^\bullet(A)}$ preserves $\Xi(A)$; we denote by
$d_{V^\bullet(A)}$ its restriction to  $\Xi(A)$. By Theorem 3, loc. cit. the inclusion $\sigma$ is a morphism of
complexes. Hence, we have the following diagram in the category of differential graded
$\dual{\mathbf{e_2}}$-coalgebras:
\begin{equation}\label{diag e2 coalg}
(\mathbb{F}_{\dual{\mathbf{e_2}}} (C^\bullet(A)), M) \xleftarrow{\sigma} (\Xi(A), d_{V^\bullet(A)}) \xrightarrow{\iota} \bc_{\dual{\mathbf{e_2}}} (V^\bullet(A))
\end{equation}

Applying the functor $\Omega_\mathbf{e_2}$ (adjoint to the functor $\bc_{\dual{\mathbf{e_2}}}$, see \cite{DTT}, Theorem
1) to \eqref{diag e2 coalg} we obtain the diagram
\begin{multline}\label{diag e2 alg}
\Omega_\mathbf{e_2}(\mathbb{F}_{\dual{\mathbf{e_2}}} (C^\bullet(A)), M) \xleftarrow{\Omega_\mathbf{e_2}(\sigma)} \\
\Omega_\mathbf{e_2}(\Xi(A), d_{V^\bullet(A)}) \xrightarrow{\Omega_\mathbf{e_2}(\iota)}
\Omega_\mathbf{e_2}(\bc_{\dual{\mathbf{e_2}}} (V^\bullet(A)))
\end{multline}
of differential graded $\mathbf{e_2}$-algebras. Let $\nu = \eta_\mathbf{e_2}\circ\Omega_\mathbf{e_2}(\iota)$, where
$\eta_\mathbf{e_2} : \Omega_\mathbf{e_2}(\bc_{\dual{\mathbf{e_2}}} (V^\bullet(A))) \to V^\bullet(A)$ is the counit of
adjunction. Thus, we have the diagram
\begin{equation}\label{diag e2 alg counit}
\Omega_\mathbf{e_2}(\mathbb{F}_{\dual{\mathbf{e_2}}} (C^\bullet(A)), M) \xleftarrow{\Omega_\mathbf{e_2}(\sigma)} \Omega_\mathbf{e_2}(\Xi(A), d_{V^\bullet(A)}) \xrightarrow{\nu} V^\bullet(A)
\end{equation}
of differential graded $\mathbf{e_2}$-algebras.

\begin{thm}[\cite{DTT}, Theorem 4]\label{thm DTT}
The maps $\Omega_\mathbf{e_2}(\sigma)$ and $\nu$ are quasi-isomorphisms.
\end{thm}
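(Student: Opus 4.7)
The plan is to exploit the Koszulness of the operad $\mathbf{e_2}$ together with the Hochschild--Kostant--Rosenberg theorem. The zigzag \eqref{diag e2 alg counit} is designed so that both outer terms are, up to quasi-isomorphism, resolutions of $V^\bullet(A)$: the right-hand $V^\bullet(A)$ trivially so, and the left-hand $\Omega_\mathbf{e_2}(\mathbb{F}_{\dual{\mathbf{e_2}}}(C^\bullet(A)),M)$ by virtue of the homotopy $\mathbf{e_2}$-structure $M$ on $C^\bullet(A)$ together with HKR. The subcoalgebra $\Xi(A)$ serves as an intermediary that does not require choosing any explicit formality morphism.

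The first and main step is to establish that $\sigma$ and $\iota$ are quasi-isomorphisms of dg $\dual{\mathbf{e_2}}$-coalgebras. I would filter all three coalgebras by cobar weight (the arity of the generating cooperation) and analyze the associated graded. For $\iota$, the differential $d_{V^\bullet(A)}$ is induced from the Gerstenhaber structure on $V^\bullet(A)$, so on associated graded the inclusion $\Xi(A) \hookrightarrow \mathbb{F}_{\dual{\mathbf{e_2}}}(V^\bullet(A))$ reduces to a purely combinatorial statement about the sub-$\dual{\mathbf{e_2}}$-cooperad that governs $\Xi(A)$; by its construction in \cite{DTT} this sub-cooperad is designed to capture the full cohomology of the bar complex of a free $\mathbf{e_2}$-algebra. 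For $\sigma$, the HKR map lifts to a comparison of the cofree coalgebras, and one checks compatibility with the two differentials $M$ and $d_{V^\bullet(A)}$ on $\Xi(A)$: on associated graded the claim reduces to HKR itself, while correction terms are controlled by the brace structure and the commutativity of $A$.

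Next I would feed this into the bar-cobar adjunction. Because $\mathbf{e_2}$ is Koszul, $\Omega_\mathbf{e_2}$ sends quasi-isomorphisms of coaugmented conilpotent dg $\dual{\mathbf{e_2}}$-coalgebras to quasi-isomorphisms of dg $\mathbf{e_2}$-algebras, so the previous step yields that $\Omega_\mathbf{e_2}(\sigma)$ and $\Omega_\mathbf{e_2}(\iota)$ are quasi-isomorphisms. The counit $\eta_\mathbf{e_2}\colon \Omega_\mathbf{e_2}\bc_{\dual{\mathbf{e_2}}}(V^\bullet(A)) \to V^\bullet(A)$ is a quasi-isomorphism by the standard Koszul duality theorem for the operad $\mathbf{e_2}$. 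Composing, $\nu = \eta_\mathbf{e_2}\circ\Omega_\mathbf{e_2}(\iota)$ is a quasi-isomorphism, completing the argument.

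The principal obstacle is the first step: verifying that $\Xi(A)$, whose definition in \cite{DTT} is carefully engineered to sit inside both $\mathbb{F}_{\dual{\mathbf{e_2}}}(C^\bullet(A))$ and $\mathbb{F}_{\dual{\mathbf{e_2}}}(V^\bullet(A))$ compatibly with their differentials, is actually large enough to receive quasi-isomorphisms from each side. The argument rests on delicate combinatorics of the Koszul cooperad $\dual{\mathbf{e_2}}$ and on the compatibility of HKR with the brace and Gerstenhaber structures; in particular one must check that the relevant weight-filtration spectral sequences degenerate at the page where the HKR isomorphism appears.
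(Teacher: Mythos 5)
First, a point of order: the paper does not prove this statement at all --- it is imported verbatim from \cite{DTT} (Theorem~4 of that paper) and used as a black box, so there is no internal proof to compare yours against. Measured against the actual argument in \cite{DTT}, your sketch has the right overall shape: a weight filtration reducing everything to the Hochschild--Kostant--Rosenberg theorem on the associated graded, the subcoalgebra $\Xi(A)$ as a common intermediary, and the Koszulness of $\mathbf{e_2}$ to handle the counit $\eta_{\mathbf{e_2}}\colon \Omega_{\mathbf{e_2}}(\bc_{\dual{\mathbf{e_2}}}(V^\bullet(A)))\to V^\bullet(A)$, which is indeed a quasi-isomorphism by the standard bar--cobar resolution theorem for algebras over a Koszul operad.

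There is, however, a genuine gap in your second step. The assertion that ``because $\mathbf{e_2}$ is Koszul, $\Omega_{\mathbf{e_2}}$ sends quasi-isomorphisms of coaugmented conilpotent dg $\dual{\mathbf{e_2}}$-coalgebras to quasi-isomorphisms'' is false as a general principle: the cobar functor over a cooperad preserves \emph{filtered} quasi-isomorphisms (weak equivalences with respect to the coradical/weight filtration), not arbitrary quasi-isomorphisms of conilpotent dg coalgebras; the standard counterexamples already occur for coassociative coalgebras, and the usual escape hatch (a connectivity hypothesis on the coalgebras) is not available here, since $C^\bullet(A)$ and $V^\bullet(A)$ are concentrated in degrees $\geq 0$ rather than being $2$-connected. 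Concretely, knowing that $\sigma$ and $\iota$ are quasi-isomorphisms of complexes does not by itself yield that $\Omega_{\mathbf{e_2}}(\sigma)$ and $\Omega_{\mathbf{e_2}}(\iota)$ are quasi-isomorphisms. The repair is to not discard the filtration after your first step: establish that $\sigma$ and $\iota$ are \emph{filtered} quasi-isomorphisms for the weight (arity) filtration, observe that this filtration induces one on the cobar constructions whose associated graded pieces you control, and then justify convergence of the resulting spectral sequence comparison --- this convergence is a real issue for $\Omega_{\mathbf{e_2}}$ of an unbounded coalgebra and is precisely the ``delicate'' point you flag at the end without resolving. Until the filtered statement and its convergence are supplied, the passage from the coalgebra-level comparison to the conclusion about $\Omega_{\mathbf{e_2}}(\sigma)$ and $\nu=\eta_{\mathbf{e_2}}\circ\Omega_{\mathbf{e_2}}(\iota)$ in \eqref{diag e2 alg counit} is not established.
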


Additionally, concerning the DGLA structures relevant to applications to deformation theory, deduced from respective
$\mathbf{e_2}$-algebra structures we have the following result.

\begin{thm}[\cite{DTT}, Theorem 2]\label{e2 to dgla}
The DGLA $\Omega_\mathbf{e_2}(\mathbb{F}_{\dual{\mathbf{e_2}}} (C^\bullet(A)), M)[1]$ and $C^\bullet(A)[1]$ are
canonically $L_\infty$-quasi-isomorphic.
\end{thm}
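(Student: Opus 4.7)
The plan is to realize the claimed quasi-isomorphism as the image, under the restriction functor from $\mathbf{e_2}$-algebras to $L_\infty$-algebras, of a rectification map of homotopy $\mathbf{e_2}$-algebras. First I would identify the two DGLAs being compared: on the right, the brace-algebra structure on $C^\bullet(A)$ produces a Gerstenhaber bracket which, together with the Hochschild differential, makes $C^\bullet(A)[1]$ into a DGLA, and this is precisely the Lie-bracket part of the homotopy $\mathbf{e_2}$-structure $M$; on the left, $\widetilde C := \Omega_\mathbf{e_2}(\mathbb{F}_{\dual{\mathbf{e_2}}}(C^\bullet(A)), M)$ is by construction a strict $\mathbf{e_2}$-algebra, whose underlying shifted Lie bracket makes $\widetilde C[1]$ a DGLA.

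Next, I would invoke the standard Koszul-duality rectification result for algebras over the Koszul operad $\mathbf{e_2}$. A square-zero, degree-one coderivation $M$ on the cofree $\dual{\mathbf{e_2}}$-coalgebra on a complex $V$ is, by definition, a homotopy $\mathbf{e_2}$-structure on $V$, and the cobar construction produces a strict $\mathbf{e_2}$-algebra $\Omega_\mathbf{e_2}(\mathbb{F}_{\dual{\mathbf{e_2}}}(V), M)$ together with a canonical $\infty$-quasi-isomorphism of homotopy $\mathbf{e_2}$-algebras
\[
\Omega_\mathbf{e_2}(\mathbb{F}_{\dual{\mathbf{e_2}}}(V), M) \longrightarrow V.
\]
Applied to $V = C^\bullet(A)$ with its brace-derived coderivation $M$, this yields a canonical $\infty$-quasi-isomorphism $\widetilde C \to C^\bullet(A)$. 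The crucial input is the acyclicity of the Koszul complex of $\mathbf{e_2}$, i.e., the fact that $\mathbf{e_2}$ is a Koszul operad.

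Finally, the inclusion of the (suitably shifted) Lie operad into $\mathbf{e_2}$ --- keeping only the Gerstenhaber bracket and discarding the commutative product --- induces a restriction functor from homotopy $\mathbf{e_2}$-algebras with $\infty$-morphisms to $L_\infty$-algebras with $L_\infty$-morphisms. On underlying graded vector spaces this functor amounts to a shift by one, and hence tautologically preserves quasi-isomorphisms. Applying it to the $\infty$-quasi-isomorphism of the previous step produces the desired canonical $L_\infty$-quasi-isomorphism $\widetilde C[1] \to C^\bullet(A)[1]$.

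I expect the main obstacle to lie in the rectification step: while the existence of the $\infty$-quasi-isomorphism from $\Omega_\mathbf{e_2}(\mathbb{F}_{\dual{\mathbf{e_2}}}(C^\bullet(A)), M)$ back to $(C^\bullet(A), M)$ is formal from Koszul duality, making it explicit in the twisted setting --- where the cobar differential on $\mathbb{F}_{\dual{\mathbf{e_2}}}(C^\bullet(A))$ is built from the non-canonical coderivation $M$ rather than from a strict $\mathbf{e_2}$-structure --- requires unwinding the bar-cobar adjunction on a twisted $\dual{\mathbf{e_2}}$-coalgebra and verifying compatibility with all higher cooperations, not merely with the differentials. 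The final passage to $L_\infty$-algebras via restriction along $\mathbf{Lie}\hookrightarrow\mathbf{e_2}$ is then essentially formal.
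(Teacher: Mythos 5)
The statement you were asked to prove is quoted in the paper directly from \cite{DTT} (Theorem 2 of loc.\ cit.) with no proof supplied, so the only comparison available is with the cited source; your outline --- rectification of the homotopy $\mathbf{e_2}$-structure via the cobar construction over the Koszul operad $\mathbf{e_2}$, i.e.\ the canonical $\infty$-quasi-isomorphism between $\Omega_\mathbf{e_2}(\mathbb{F}_{\dual{\mathbf{e_2}}}(C^\bullet(A)), M)$ and $(C^\bullet(A),M)$, followed by restriction along $\mathbf{Lie}\{1\}\hookrightarrow\mathbf{e_2}$ --- is exactly the argument used there. The one point you assert without justification is that the $L_\infty$-structure induced on $C^\bullet(A)[1]$ by the brace-derived coderivation $M$ is the \emph{strict} DGLA structure (Hochschild differential plus Gerstenhaber bracket, no higher brackets); this is a specific property of the particular $M$ constructed in \cite{DTT}, not of an arbitrary homotopy $\mathbf{e_2}$-structure whose binary bracket is the Gerstenhaber bracket, and it is needed for your final restriction step to land on the DGLA named in the statement.
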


\begin{cor}[Formality]\label{formality DTT}
The DGLA $C^\bullet(A)[1]$ and $V^\bullet(A)[1]$ are $L_\infty$-quasi-isomorphic.
\end{cor}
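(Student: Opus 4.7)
The plan is to combine Theorems \ref{thm DTT} and \ref{e2 to dgla} into a single zig-zag of $L_{\infty}$-quasi-isomorphisms relating $C^\bullet(A)[1]$ and $V^\bullet(A)[1]$. The bridge between them is the functor that extracts the underlying shifted Lie structure from a Gerstenhaber-style algebra: the operad $\mathbf{e_2}$ receives a canonical map from the desuspended Lie operad, so that the Gerstenhaber bracket of a differential graded $\mathbf{e_2}$-algebra $E$ equips $E[1]$ with the structure of a DGLA. This assignment is functorial and sends quasi-isomorphisms of differential graded $\mathbf{e_2}$-algebras to quasi-isomorphisms of DGLAs, since it is the identity on underlying complexes.

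First I apply this shift functor to the span \eqref{diag e2 alg counit} of differential graded $\mathbf{e_2}$-algebras. By Theorem \ref{thm DTT}, the resulting diagram
\[
\Omega_\mathbf{e_2}(\mathbb{F}_{\dual{\mathbf{e_2}}}(C^\bullet(A)),M)[1] \xleftarrow{\Omega_\mathbf{e_2}(\sigma)[1]} \Omega_\mathbf{e_2}(\Xi(A),d_{V^\bullet(A)})[1] \xrightarrow{\nu[1]} V^\bullet(A)[1]
\]
is a span of DGLA quasi-isomorphisms. Since any DGLA quasi-isomorphism admits an $L_{\infty}$-quasi-inverse (a standard fact in homotopical Lie theory), this span may be composed to yield a single $L_{\infty}$-quasi-isomorphism between its two ends, namely between $\Omega_\mathbf{e_2}(\mathbb{F}_{\dual{\mathbf{e_2}}}(C^\bullet(A)),M)[1]$ and $V^\bullet(A)[1]$.

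Next, Theorem \ref{e2 to dgla} furnishes a canonical $L_{\infty}$-quasi-isomorphism between $\Omega_\mathbf{e_2}(\mathbb{F}_{\dual{\mathbf{e_2}}}(C^\bullet(A)),M)[1]$ and $C^\bullet(A)[1]$. Composing this with the $L_{\infty}$-quasi-isomorphism constructed in the previous step yields the desired $L_{\infty}$-quasi-isomorphism from $C^\bullet(A)[1]$ to $V^\bullet(A)[1]$. There is no real obstacle in the argument, because all the genuine work — constructing the intermediate $\mathbf{e_2}$-algebra $\Omega_\mathbf{e_2}(\Xi(A),d_{V^\bullet(A)})$ and verifying that the two structure maps are $\mathbf{e_2}$-algebra quasi-isomorphisms — is already packaged in Theorem \ref{thm DTT}; the only subtle ingredient is the general principle that a DGLA quasi-isomorphism can be inverted in the $L_{\infty}$ sense, which allows us to turn a zig-zag into a single $L_{\infty}$-morphism.
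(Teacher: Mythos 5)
Your argument is correct and is exactly the (implicit) proof intended by the paper: the corollary is stated immediately after Theorems \ref{thm DTT} and \ref{e2 to dgla} precisely because one passes to underlying shifted DGLAs in the span \eqref{diag e2 alg counit}, inverts the quasi-isomorphism $\Omega_\mathbf{e_2}(\sigma)[1]$ in the $L_\infty$ sense, and composes with the canonical $L_\infty$-quasi-isomorphism of Theorem \ref{e2 to dgla}. Nothing is missing; your identification of $L_\infty$-invertibility of DGLA quasi-isomorphisms as the only nontrivial ingredient is accurate.
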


\subsection{Some (super-)symmetries}\label{subsection: symmetries}
For applications to deformation theory of algebroid stacks we will need certain equivariance properties of the maps
described in \ref{review of DTT}.

For $a\in A$ let $i_a \colon C^\bullet(A) \to C^\bullet(A)[-1]$ denote the adjoint action (in the sense of the
Gerstenhaber bracket and the identification $A = C^0(A)$). It is given by the formula
\[
i_a D(a_1, \ldots, a_n)=\sum _{i=0}^n (-1)^k D(a_1, \ldots, a_i, a, a_{k+1}, \ldots, a_n) .
\]
The operation $i_a$ extends uniquely to a coderivation of $\mathbb{F}_{\dual{\mathbf{e_2}}} (C^\bullet(A))$; we denote
this extension by $i_a$ as well. Furthermore, the subcoalgebra $\Xi (A)$ is preserved by $i_a$.

Since the operation $i_a$ is a derivation of the cup product as well as of all of the brace operations on
$C^\bullet(A)$ and the homotopy-$\mathbf{e_2}$-algebra structure on $C^\bullet(A)$ given in terms of the cup product
and the brace operations it follows that $i_a$ anti-commutes with the differential $M$. Hence, the coderivation $i_a$
induces a derivation of the differential graded $\mathbf{e_2}$-algebra
$\Omega_\mathbf{e_2}(\mathbb{F}_{\dual{\mathbf{e_2}}} (C^\bullet(A)), M)$ which will be denoted by $i_a$ as well. For
the same reason the DGLA $\Omega_\mathbf{e_2}(\mathbb{F}_{\dual{\mathbf{e_2}}} (C^\bullet(A)), M)[1]$ and
$C^\bullet(A)[1]$ are quasi-isomorphic in a way which commutes with the respective operations $i_a$.

On the other hand, let $i_a \colon V^\bullet (A) \to V^\bullet(A)[-1]$ denote the adjoint action in the sense of the
Schouten bracket and the identification $A = V^0(A)$. The operation $i_a$ extends uniquely to a coderivation of
$\mathbb{F}_{\dual{\mathbf{e_2}}}(V^\bullet(A))$ which anticommutes with the differential $d_{V^\bullet(A)}$ because
$i_a$ is a derivation of the $\mathbf{e_2}$-algebra structure on $V^\bullet(A)$. We denote this coderivation as well as
its unique extension to a derivation of the differential graded $\mathbf{e_2}$-algebra
$\Omega_\mathbf{e_2}(\bc_{\dual{\mathbf{e_2}}} (V^\bullet(A)))$ by $i_a$. The counit map $\eta_\mathbf{e_2} \colon
\Omega_\mathbf{e_2}(\bc_{\dual{\mathbf{e_2}}} (V^\bullet(A))) \to V^\bullet(A)$ commutes with respective operations
$i_a$.

The subcoalgebra $\Xi(A)$ of $\mathbb{F}_{\dual{\mathbf{e_2}}}(C^\bullet(A))$ and
$\mathbb{F}_{\dual{\mathbf{e_2}}}(V^\bullet(A))$ is preserved by the respective operations $i_a$. Moreover, the
restrictions of the two operations to $\Xi(A)$ coincide, i.e. the maps in \eqref{diag e2 coalg} commute with $i_a$ and,
therefore, so do the maps in \eqref{diag e2 alg} and \eqref{diag e2 alg counit}.

\subsection{Deformations of $\mathcal{O}$ and Kontsevich formality}
Suppose that $X$ is a manifold. Let $\mathcal{O}_X$ (respectively, $\mathcal{T}_X$) denote the structure sheaf
(respectively, the sheaf of vector fields). The construction of the diagram localizes on $X$ yielding the diagram of
sheaves of differential graded $\mathbf{e_2}$-algebras
\begin{equation}\label{diag e2 alg counit sheaves}
\Omega_\mathbf{e_2}(\mathbb{F}_{\dual{\mathbf{e_2}}} (C^\bullet(\mathcal{O}_X)), M) \xleftarrow{\Omega_\mathbf{e_2}(\sigma)} \Omega_\mathbf{e_2}(\Xi(\mathcal{O}_X), d_{V^\bullet(\mathcal{O}_X)}) \xrightarrow{\nu} V^\bullet(\mathcal{O}_X) ,
\end{equation}
where $C^\bullet(\mathcal{O}_X)$ denotes the sheaf of multidifferential operators and $V^\bullet(\mathcal{O}_X) :=
\Symm^\bullet_{\mathcal{O}_X}(\mathcal{T}_X[-1])$ denotes the sheaf of multivector fields. Theorem \ref{thm DTT}
extends easily to this case stating that the morphisms $\Omega_\mathbf{e_2}(\sigma)$ and $\nu$ in \eqref{diag e2 alg
counit sheaves} are quasi-isomorphisms of sheaves of differential graded $\mathbf{e_2}$-algebras.


\section{Formality for the algebroid Hochschild complex}\label{for}
\subsection{A version of \cite{DTT} for jets}\label{subsection: DTT for jets}
Let $C^\bullet(\mathcal{J}_X)$ denote sheaf of continuous (with respect to the adic topology)
$\mathcal{O}_X$-multilinear Hochschild cochains on $\mathcal{J}_X$. Let $V^\bullet(\mathcal{J}_X) =
\Symm^\bullet_{\mathcal{J}_X}(\Der^{cont}_{\mathcal{O}_X}(\mathcal{J}_X)[-1])$.

Working now in the category of graded $\mathcal{O}_X$-modules we have the diagram
\begin{equation}\label{diag e2 alg jets}
\Omega_\mathbf{e_2}(\mathbb{F}_{\dual{\mathbf{e_2}}}(C^\bullet(\mathcal{J}_X)), M) \xleftarrow{\Omega_\mathbf{e_2}(\sigma)} \\ \Omega_\mathbf{e_2}(\Xi(\mathcal{J}_X), d_{V^\bullet(\mathcal{J}_X)}) \xrightarrow{\nu} V^\bullet(\mathcal{J}_X)
\end{equation}
of sheaves of differential graded $\mathcal{O}_X$-$\mathbf{e_2}$-algebras. Theorem \ref{thm DTT} extends easily to this
situation: the morphisms $\Omega_\mathbf{e_2}(\sigma)$ and $\nu$ in \eqref{diag e2 alg jets} are quasi-isomorphisms.
The sheaves of DGLA $\Omega_\mathbf{e_2}(\mathbb{F}_{\dual{\mathbf{e_2}}}(C^\bullet(\mathcal{J}_X)), M)[1]$ and $
C^\bullet(\mathcal{J}_X)[1]$ are canonically $L_\infty$-quasi-isomorphic.

The canonical flat connection $\nabla^{can}$ on $\mathcal{J}_X$ induces a flat connection which we denote
$\nabla^{can}$ as well on each of the objects in the diagram \eqref{diag e2 alg jets}. Moreover, the maps
$\Omega_\mathbf{e_2}(\sigma)$ and $\nu$ are flat with respect to $\nabla^{can}$ hence induce the maps of respective de
Rham complexes
\begin{multline}\label{diag e2 alg jets de Rham}
\DR(\Omega_\mathbf{e_2}(\mathbb{F}_{\dual{\mathbf{e_2}}}(C^\bullet(\mathcal{J}_X)), M))
\xleftarrow{\DR(\Omega_\mathbf{e_2}(\sigma))} \\ \DR(\Omega_\mathbf{e_2}(\Xi(\mathcal{J}_X),
d_{V^\bullet(\mathcal{J}_X)})) \xrightarrow{\DR(\nu)} \DR(V^\bullet(\mathcal{J}_X))
\end{multline}
where, for $(K^\bullet, d)$ one of the objects in \eqref{diag e2 alg jets} we denote by $\DR(K^\bullet,d)$ the total
complex of the double complex $(\mathcal{A}^\bullet_X\otimes K^\bullet, d, \nabla^{can})$. All objects in the diagram
\eqref{diag e2 alg jets de Rham} have canonical structures of differential graded $\mathbf{e_2}$-algebras and the maps
are morphisms thereof.

The DGLA $\Omega_\mathbf{e_2}(\mathbb{F}_{\dual{\mathbf{e_2}}}(C^\bullet(\mathcal{J}_X)), M)[1]$ and
$C^\bullet(\mathcal{J}_X)[1]$ are canonically $L_\infty$-quasi-isomorphic in a way compatible with $\nabla^{can}$.
Hence, the DGLA $\DR(\Omega_\mathbf{e_2}(\mathbb{F}_{\dual{\mathbf{e_2}}}(C^\bullet(\mathcal{J}_X)), M)[1])$ and
$\DR(C^\bullet(\mathcal{J}_X)[1])$ are canonically $L_\infty$-quasi-isomorphic.

\subsection{A version of \cite{DTT} for jets with a twist}\label{subsection: jets with a twist}
Suppose that $\omega\in\Gamma(X;\mathcal{A}^2_X\otimes\mathcal{J}_X/\mathcal{O}_X)$ satisfies $\nabla^{can}\omega = 0$.

For each of the objects in \eqref{diag e2 alg jets de Rham} we denote by $i_\omega$ the operation which is induced by
the one described in \ref{subsection: symmetries} and the wedge product on $\mathcal{A}^\bullet_X$. Thus, for each
differential graded $\mathbf{e_2}$-algebra $(N^\bullet, d)$ in \eqref{diag e2 alg jets de Rham} we have a derivation of
degree one and square zero $i_\omega$ which anticommutes with $d$ and we denote by $(N^\bullet,d)_\omega$ the
\emph{$\omega$-twist} of $(N^\bullet, d)$, i.e. the differential graded $\mathbf{e_2}$-algebra $(N^\bullet, d +
i_\omega)$. Since the morphisms in \eqref{diag e2 alg jets de Rham} commute with the respective operations $i_\omega$,
they give rise to morphisms of respective $\omega$-twists
\begin{multline}\label{diag e2 alg jets de Rham twisted}
\DR(\Omega_\mathbf{e_2}(\mathbb{F}_{\dual{\mathbf{e_2}}}(C^\bullet(\mathcal{J}_X)), M))_\omega
\xleftarrow{\DR(\Omega_\mathbf{e_2}(\sigma))} \\ \DR(\Omega_\mathbf{e_2}(\Xi(\mathcal{J}_X),
d_{V^\bullet(\mathcal{J}_X)}))_\omega \xrightarrow{\DR(\nu)} \DR(V^\bullet(\mathcal{J}_X))_\omega .
\end{multline}

Let $F_\bullet\mathcal{A}^\bullet_X$ denote the stupid filtration: $F_i\mathcal{A}^\bullet_X = \mathcal{A}^{\geq
-i}_X$. The filtration $F_\bullet\mathcal{A}^\bullet_X$ induces a filtration denoted $F_\bullet\DR(K^\bullet,d)_\omega$
for each object $(K^\bullet, d)$ of \eqref{diag e2 alg jets} defined by $F_i\DR(K^\bullet,d)_\omega =
F_i\mathcal{A}^\bullet_X\otimes K^\bullet$. As is easy to see, the associated graded complex is given by
\begin{equation}\label{Gr stupid}
Gr_{-p}\DR(K^\bullet,d)_\omega = (\mathcal{A}^p_X\otimes K^\bullet , \id\otimes d) .
\end{equation}
It is clear that the morphisms $\DR(\Omega_\mathbf{e_2}(\sigma))$ and $\DR(\nu)$ are filtered with respect to
$F_\bullet$.

\begin{thm}\label{diag e2 coalg jets de Rham twisted are filtered quisms}
The morphisms in \eqref{diag e2 alg jets de Rham twisted} are filtered quasi-isomorphisms, i.e. the maps
$Gr_i\DR(\Omega_\mathbf{e_2}(\sigma))$ and $Gr_i\DR(\nu)$ are quasi-isomorphisms for all $i \in \mathbb{Z}$.
\end{thm}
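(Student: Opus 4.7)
The plan is to exploit the stupid filtration $F_\bullet$ to reduce the statement to the untwisted jet formality recorded in Section \ref{subsection: DTT for jets}. The crucial point is that the twist $i_\omega$ sits in strictly negative filtration degree and hence disappears on the associated graded.

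First I would check filtration degrees of the three summands of the twisted differential. Since $\omega\in \Gamma(X;\mathcal{A}^2_X\otimes\mathcal{J}_X/\mathcal{O}_X)$, the operator $i_\omega$ wedges forms by a $2$-form and therefore sends $F_i\mathcal{A}^\bullet_X\otimes K^\bullet$ into $F_{i-2}\mathcal{A}^\bullet_X\otimes K^\bullet$; similarly $\nabla^{can}$ has filtration degree $-1$. Only the internal differential $d$ of $K^\bullet$ is strictly filtration-preserving. Consequently, on $Gr_{-p}$ the twisted differential $\nabla^{can}+d+i_\omega$ reduces to $\id\otimes d$, recovering the identification \eqref{Gr stupid}, and the twist $i_\omega$ acts as $0$ on the associated graded.

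Next I would observe that the morphisms $\DR(\Omega_\mathbf{e_2}(\sigma))$ and $\DR(\nu)$ are of the form $\id_{\mathcal{A}^\bullet_X}\otimes f$ for an $\mathcal{O}_X$-linear morphism $f$ of complexes of sheaves, hence they are manifestly filtered and the induced map on $Gr_{-p}$ is $\id_{\mathcal{A}^p_X}\otimes f$ on $\mathcal{A}^p_X\otimes_{\mathcal{O}_X} K^\bullet$. By the sheaf-theoretic jet version of Theorem \ref{thm DTT} stated in Section \ref{subsection: DTT for jets}, the underlying maps $f=\Omega_\mathbf{e_2}(\sigma)$ and $f=\nu$ are quasi-isomorphisms of complexes of sheaves of $\mathcal{O}_X$-modules.

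To finish, I would use that $\mathcal{A}^p_X$ is a locally free $\mathcal{O}_X$-module of finite rank; therefore the functor $\mathcal{A}^p_X\otimes_{\mathcal{O}_X}(-)$ is exact and preserves quasi-isomorphisms of complexes of sheaves. Hence the induced maps $Gr_{-p}\DR(\Omega_\mathbf{e_2}(\sigma))$ and $Gr_{-p}\DR(\nu)$ are quasi-isomorphisms for every $p\in\mathbb{Z}$, which is exactly the required filtered quasi-isomorphism. The argument is essentially bookkeeping; the only substantive ingredient is the untwisted jet formality of Section \ref{subsection: DTT for jets}, and the one conceptual step is identifying the filtration degrees of $i_\omega$ and $\nabla^{can}$, which is what makes the twist invisible on the associated graded.
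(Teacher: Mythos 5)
Your proposal is correct and follows essentially the same route as the paper: identify the associated graded of the twisted de Rham complex as $(\mathcal{A}^p_X\otimes K^\bullet,\id\otimes d)$ (the paper records this as \eqref{Gr stupid}, and your filtration-degree bookkeeping for $i_\omega$ and $\nabla^{can}$ is exactly the justification), then invoke the untwisted jet version of Theorem \ref{thm DTT} together with flatness of $\mathcal{A}^p_X$ over $\mathcal{O}_X$ to conclude that $\id\otimes\Omega_{\mathbf{e_2}}(\sigma)$ and $\id\otimes\nu$ are quasi-isomorphisms. No gaps.
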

\begin{proof}
We consider the case of $\DR(\Omega_\mathbf{e_2}(\sigma))$ leaving $Gr_i\DR(\nu)$ to the reader.

The map $Gr_{-p}\DR(\Omega_\mathbf{e_2}(\sigma))$ induced by $\DR(\Omega_\mathbf{e_2}(\sigma))$ on the respective
associated graded objects in degree $-p$ is equal to the map of complexes
\begin{equation}\label{Gr DR}
\id\otimes\Omega_\mathbf{e_2}(\sigma) \colon \mathcal{A}^p_X\otimes\Omega_\mathbf{e_2}(\Xi(\mathcal{J}_X), d_{V^\bullet(\mathcal{J}_X)}) \to \mathcal{A}^p_X\otimes\Omega_\mathbf{e_2}(\mathbb{F}_{\dual{\mathbf{e_2}}}(C^\bullet(\mathcal{J}_X)), M) .
\end{equation}
The map $\sigma$ is a quasi-isomorphism by Theorem \ref{thm DTT}, therefore so is $\Omega_\mathbf{e_2}(\sigma)$. Since
$\mathcal{A}^p_X$ is flat over $\mathcal{O}_X$, the map \eqref{Gr DR} is a quasi-isomorphism.
\end{proof}

\begin{cor}\label{cor: e2 coalg quisms}
The maps $\DR(\Omega_\mathbf{e_2}(\sigma))$ and $\DR(\nu)$ in \eqref{diag e2 alg jets de Rham twisted} are
quasi-isomorphisms of sheaves of differential graded $\mathbf{e_2}$-algebras.
\end{cor}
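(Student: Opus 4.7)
The plan is to deduce the corollary from Theorem \ref{diag e2 coalg jets de Rham twisted are filtered quisms} by a standard spectral-sequence comparison. First, I would observe that since $X$ is a smooth manifold of finite dimension $n$, the sheaves $\mathcal{A}^p_X$ vanish for $p > n$; consequently, $F_{-p}\DR(K^\bullet,d)_\omega = 0$ for $p > n$ for each of the three objects $(K^\bullet,d)$ appearing in \eqref{diag e2 alg jets}. Since also $F_0\DR(K^\bullet,d)_\omega = \DR(K^\bullet,d)_\omega$, the filtration $F_\bullet$ on each of these three complexes is both bounded and exhaustive.

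For such a filtration the associated spectral sequence converges strongly to the cohomology of the filtered complex, and this remains true in the abelian category of sheaves of $\mathbb{C}$-vector spaces (or equivalently stalkwise, since the dimension bound $n$ is uniform on $X$). Because $\DR(\Omega_\mathbf{e_2}(\sigma))$ and $\DR(\nu)$ are morphisms of filtered complexes, they induce morphisms between the corresponding spectral sequences. By Theorem \ref{diag e2 coalg jets de Rham twisted are filtered quisms}, the maps $Gr_i\DR(\Omega_\mathbf{e_2}(\sigma))$ and $Gr_i\DR(\nu)$ are quasi-isomorphisms of sheaves for every $i$, which is precisely the statement that the induced maps on the $E_1$-pages (computed from the associated graded as in \eqref{Gr stupid}) are isomorphisms. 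The comparison theorem for bounded spectral sequences then yields isomorphisms on all subsequent pages and on the abutments, so that $\DR(\Omega_\mathbf{e_2}(\sigma))$ and $\DR(\nu)$ are quasi-isomorphisms of sheaves of complexes.

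The remaining assertion---that these maps are morphisms of sheaves of differential graded $\mathbf{e_2}$-algebras---is already built into the construction of the diagram \eqref{diag e2 alg jets de Rham twisted}: the operation $i_\omega$ is a derivation of the underlying graded $\mathbf{e_2}$-algebra structure, so the $\omega$-twist modifies only the differential and not the $\mathbf{e_2}$-operations, nor the morphisms $\DR(\Omega_\mathbf{e_2}(\sigma))$ and $\DR(\nu)$ between these algebras.

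There is no serious obstacle in this deduction; the substantive technical content has already been carried out in Theorem \ref{diag e2 coalg jets de Rham twisted are filtered quisms}. The only point worth explicit verification is the boundedness of $F_\bullet$, which follows at once from finite-dimensionality of $X$ and ensures the convergence hypothesis needed to pass from a filtered quasi-isomorphism to an honest quasi-isomorphism.
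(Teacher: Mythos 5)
Your proposal is correct and follows exactly the intended deduction: the paper gives no separate proof of the corollary, treating it as an immediate consequence of Theorem \ref{diag e2 coalg jets de Rham twisted are filtered quisms} via the finiteness of the stupid filtration (since $\mathcal{A}^p_X = 0$ for $p > \dim X$), which is precisely the spectral-sequence (or, equivalently, finite-filtration induction) argument you spell out.
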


Additionally, the DGLA $\DR(\Omega_\mathbf{e_2}(\mathbb{F}_{\dual{\mathbf{e_2}}}(C^\bullet(\mathcal{J}_X)), M)[1])$ and
$\DR(C^\bullet(\mathcal{J}_X)[1])$ are canonically $L_\infty$-quasi-isomorphic in a way which commutes with the
respective operations $i_\omega$ which implies that the respective $\omega$-twists
$\DR(\Omega_\mathbf{e_2}(\mathbb{F}_{\dual{\mathbf{e_2}}}(C^\bullet(\mathcal{J}_X)), M)[1])_\omega$ and
$\DR(C^\bullet(\mathcal{J}_X)[1])_\omega$ are canonically $L_\infty$-quasi-isomorphic.

\subsection{$L_\infty$-structures on multivectors}\label{subsection: structures on multivectors}
The canonical pairing $\ip \colon \mathcal{A}^1_X \otimes \mathcal{T}_X \to \mathcal{O}_X$ extends to the pairing
\[
\ip \colon \mathcal{A}^1_X \otimes V^\bullet(\mathcal{O}_X) \to V^\bullet(\mathcal{O}_X)[-1]
\]

For $k \geq 1$, $\omega = \alpha_1\wedge\ldots\wedge\alpha_k$, $\alpha_i\in\mathcal{A}^1_X$, $i = 1,\ldots,k$, let
\[
\Phi(\omega) \colon \Symm^k V^\bullet(\mathcal{O}_X)[2] \to V^\bullet(\mathcal{O}_X)[k]
\]
denote the map given by the formula
\begin{multline*}
\Phi(\omega)(\pi_1,\ldots,\pi_k) = (-1)^{(k-1)(|\pi_1|-1)+\ldots +2|(\pi_{k-3}|-1)+(|\pi_{k-2}|-1)}\times \\
\sum_\sigma \sgn(\sigma) \langle\alpha_1,\pi_{\sigma(1)}\rangle\wedge \dots
\wedge\langle\alpha_k,\pi_{\sigma(k)}\rangle ,
\end{multline*}
where $|\pi|=l$ for $\pi \in V^l({\mathcal O}_X)$. For $\alpha \in \mathcal{O}_X$ let $\Phi(\alpha) = \alpha \in
V^0(\mathcal{O}_X)$.

Recall that a graded vector space $W$ gives rise to the graded Lie algebra $\Der(\coComm(W[1]))$. An element
$\gamma\in\Der(\coComm(W[1]))$ of degree one which satisfies $[\gamma,\gamma] = 0$ defines a structure of an $L_\infty$-algebra on $W$. Such a $\gamma$ determines a differential $\partial_\gamma := [\gamma, . ]$ on $\Der(\coComm(W[1]))$,
such that $(\Der(\coComm(W[1])), \partial_\gamma)$ is a differential graded Lie algebra. If $\mathfrak{g}$ is a graded
Lie algebra and $\gamma$ is the element of $\Der(\coComm(\mathfrak{g}[1]))$ corresponding to the bracket on
$\mathfrak{g}$, then $(\Der(\coComm(\mathfrak{g}[1])), \partial_\gamma)$ is equal to the shifted Chevalley cochain
complex $C^\bullet(\mathfrak{g};\mathfrak{g})[1]$.

In what follows we consider the (shifted) de Rham complex $\mathcal{A}^\bullet_X[2]$ as a differential graded Lie
algebra with the trivial bracket.
\begin{lemma}\label{de Rham to Chevalley}
The map $\omega \mapsto \Phi(\omega)$ defines a morphism of sheaves of differential graded Lie algebras
\begin{equation}\label{DR to def}
\Phi \colon \mathcal{A}^\bullet_X[2] \to C^\bullet(V^\bullet(\mathcal{O}_X)[1]; V^\bullet(\mathcal{O}_X)[1])[1] .
\end{equation}
\end{lemma}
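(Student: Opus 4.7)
The plan is to verify directly the two conditions making $\Phi$ a morphism of sheaves of DGLAs: compatibility with brackets and with differentials. The source $\mathcal{A}^\bullet_X[2]$ has trivial bracket and de Rham differential, while the target $\Der(\coComm(V^\bullet(\mathcal{O}_X)[2]))$ carries the coderivation commutator as bracket and $[\gamma_S,\cdot]$ as differential, where $\gamma_S$ is the coderivation corresponding to the Schouten bracket on $V^\bullet(\mathcal{O}_X)[1]$. Both conditions are local on $X$ and $\mathcal{O}_X$-multilinear in the inputs, so by decomposability of differential forms it suffices to check them for $\omega = \alpha_1\wedge\cdots\wedge\alpha_k$ with each $\alpha_i$ a 1-form.

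For the bracket property, one must show $[\Phi(\omega_1),\Phi(\omega_2)] = 0$ for decomposable $\omega_1 = \alpha_1\wedge\cdots\wedge\alpha_{k_1}$ and $\omega_2 = \beta_1\wedge\cdots\wedge\beta_{k_2}$. The corestriction of this coderivation commutator to $V^\bullet(\mathcal{O}_X)[2]$ is given by the standard antisymmetrized substitution formula with Koszul signs. When $\Phi(\omega_2)(\pi_{\sigma(1)},\ldots,\pi_{\sigma(k_2)})$, a wedge of 1-form contractions, is placed in a slot of $\Phi(\omega_1)$, exactly one $\alpha_j$ of $\omega_1$ contracts against this wedge. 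Since $\langle\alpha_j,\cdot\rangle$ is a graded derivation of $\wedge$ on $V^\bullet(\mathcal{O}_X)$, the result is a sum of terms each containing a nested contraction $\langle\alpha_j,\langle\beta_m,\pi\rangle\rangle$. The identity $\langle\alpha,\langle\beta,\pi\rangle\rangle = -\langle\beta,\langle\alpha,\pi\rangle\rangle$, which holds for any two 1-forms $\alpha,\beta$ and any multivector $\pi$, then bijects these with the analogous terms obtained by substituting $\Phi(\omega_1)$ into $\Phi(\omega_2)$; a sign count shows the two expressions cancel exactly in the commutator.

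For the differential, I would verify $\Phi(d\omega) = [\gamma_S,\Phi(\omega)]$ on the same decomposables, using $d\omega = \sum_j(-1)^{j-1}\alpha_1\wedge\cdots\wedge d\alpha_j\wedge\cdots\wedge\alpha_k$ and locally decomposing each 2-form $d\alpha_j$ as a sum of wedges of 1-forms. The Chevalley differential $[\gamma_S,\Phi(\omega)]$ splits into two kinds of contributions: inserting the Schouten bracket of two input arguments into $\Phi(\omega)$, and forming the Schouten bracket of $\Phi(\omega)$'s output with an extra argument. The matching with $\Phi(d\omega)$ is controlled by the Cartan-type identity
\[
\langle d\alpha,\pi\wedge\pi'\rangle = \pm[\langle\alpha,\pi\rangle,\pi']_S \pm [\pi,\langle\alpha,\pi'\rangle]_S \pm \langle\alpha,[\pi,\pi']_S\rangle
\]
for $\alpha \in \mathcal{A}^1_X$ and multivectors $\pi,\pi'$, which extends the classical formula $d\alpha(v,w) = v(\alpha(w)) - w(\alpha(v)) - \alpha([v,w])$ from vector fields to multivectors via the graded derivation property of the Schouten bracket with respect to $\wedge$. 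Inserting this identity into the defining expression for $\Phi$ and summing over positions $j$ yields precisely $[\gamma_S,\Phi(\omega)]$.

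The main obstacle is not conceptual but combinatorial: the various Koszul signs arising from the defining formula of $\Phi$, from the antisymmetrizations in the coderivation commutator, and from the Chevalley differential must all be tracked consistently and shown to balance. The underlying identities, namely anticommutativity of iterated 1-form contractions on multivectors and the generalized Cartan formula above, are classical, and the substance of the lemma is simply that they assemble precisely into a DGLA morphism.
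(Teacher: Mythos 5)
Your proposal follows essentially the same route as the paper's proof: reduce to decomposable forms, kill the bracket $[\Phi(\omega_1),\Phi(\omega_2)]$ by pairing off terms through the anticommutativity $\langle\alpha,\langle\beta,\pi\rangle\rangle=-\langle\beta,\langle\alpha,\pi\rangle\rangle$ of iterated contractions, and match $\Phi(d\omega)$ against the Chevalley differential by a multivector Cartan formula. The paper does exactly this, writing out the Schouten bracket on decomposables and the Nijenhuis--Richardson bracket explicitly and then checking signs.

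One concrete correction is needed in the identity driving the second half. Its left-hand side must be the cross-term contraction $\Phi(d\alpha)(\pi,\pi')$ (one leg of $d\alpha$ contracted into $\pi$, the other into $\pi'$), not the full contraction $\langle d\alpha,\pi\wedge\pi'\rangle$ of the $2$-form against the wedge. These agree when $\pi,\pi'$ are vector fields, but for higher multivectors the full contraction contains extra terms in which both legs of $d\alpha$ land in the same argument --- e.g.\ for $\pi=X\wedge Y$, $\pi'=Z$ the term $d\alpha(X,Y)\,Z$ --- and no such term is produced by the Schouten-bracket expressions on the right, so the identity as literally written is false. The corrected version, which is the one the paper states and uses, reads
\[
\langle\alpha,[\pi,\rho]\rangle-\langle[\alpha,\pi],\rho\rangle-(-1)^{|\pi|-1}\langle\pi,[\alpha,\rho]\rangle=(-1)^{|\pi|-1}\,\Phi(d\alpha)(\pi,\rho),
\]
and with that repair (plus the sign bookkeeping you already flag as the main labor) your outline is a faithful reconstruction of the paper's argument.
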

\begin{proof}
Recall the explicit formulas for the Schouten bracket. If $f$ and $g$ are functions and $X_i$, $Y_j$ are vector fields,
then
\begin{multline*}
[fX_1\ldots X_k,gY_1\ldots Y_l] = \sum_i (-1)^{k-i}fX_k(g)X_1\ldots {\widehat{X_i}}\ldots X_kY_1\ldots Y_l + \\
\sum _j(-1)^j Y_j(f)gX_1\ldots X_kY_1\ldots {\widehat{Y_j}}\ldots Y_l + \\
\sum_{i,j}(-1)^{i+j}fgX_1\ldots {\widehat{X_i}}\ldots X_kY_1\ldots {\widehat{Y_j}}\ldots Y_l
\end{multline*}
Note that for a one-form $\omega$ and for vector fields $X$ and $Y$
\begin{equation}\label{eq:phixy}
\langle \omega,[X,Y]\rangle-\langle [\omega,X],Y\rangle-\langle X,[\omega,Y]\rangle=\Phi(d\omega)(X,Y)
\end{equation}
From the two formulas above we deduce by an explicit computation that
\[
\langle \omega,[\pi,\rho]\rangle-\langle [\omega,\pi],\rho\rangle-(-1)^{|\pi|-1}\langle \pi,[\omega,\rho]\rangle=
(-1)^{|\pi|-1}\Phi(d\omega)(\pi,\rho)
\]
Note that Lie algebra cochains are invariant under the symmetric group acting by permutations multiplied by signs that
are computed by the following rule: a permutation of $\pi_i$ and $\pi_j$ contributes a factor $(-1)^{|\pi_i||\pi_j|}.$
We use the explicit formula for the bracket on the Lie algebra complex.
\[
[\Phi,\Psi]=\Phi\circ \Psi-(-1)^{|\Phi||\Psi|}\Psi\circ\Phi
\]
\[
(\Phi\circ \Psi)(\pi_1,\ldots,\pi_{k+l-1})=\sum_{I,J}\epsilon(I,J)\Phi(\Psi(\pi_{i_1},\ldots,\pi_{i_k}),\pi_{j_1},\ldots,\pi_{j_{l-1}})
\]
Here $I=\{i_1,\ldots,i_k\};$ $J=\{j_1,\ldots,j_{l-1}\};$ $i_1<\ldots<i_k;$ $j_1<\ldots<j_{l-1};$ $I\coprod
J=\{1,\ldots,k+l-1\};$ the sign $\epsilon(I,J)$ is computed by the same sign rule as above. The differential is given
by the formula
\[\partial \Phi=[m,\Phi]
\]
where $m(\pi,\rho)=(-1)^{|\pi|-1}[\pi,\rho].$ Let $\alpha=\alpha_1\ldots\alpha_k$ and $\beta=\beta_1\ldots\beta_l.$ We
see from the above that both cochains $\Phi(\alpha)\circ \Phi(\beta)$ and $\Phi(\beta)\circ \Phi(\alpha)$ are
antisymmetrizations with respect to $\alpha_i$ and $\beta_j$ of the sums
\[
\sum_{I,J,p} \pm \langle \alpha_1\beta_1,\pi _p\rangle \langle \alpha_2,\pi_{i_1}\rangle \ldots \langle \alpha_k,\pi_{i_{k-1}}\rangle \langle \beta_2,\pi_{j_1}\rangle \ldots \langle \beta_l,\pi_{j_{l-1}}\rangle
\]
over all partitions $\{1,\ldots,k+l-1\}=I\coprod J \coprod \{p\}$ where $i_1<\ldots<i_{k-1}$ and $j_1<\ldots<j_{l-1};$
here $\langle \alpha\beta,\pi\rangle=\langle \alpha, \langle \beta,\pi\rangle\rangle.$ After checking the signs, we
conclude that $[\Phi(\alpha), \Phi(\beta)]=0.$ Also, from the definition of the differential, we see that $\partial
\Phi(\alpha)(\pi_1, \ldots, \pi _{k+1})$ is the antisymmetrizations with respect to $\alpha_i$ and $\beta_j$ of the sum
\begin{multline*}
\sum_{i<j}\pm (\langle \alpha_1,[\pi_i,\pi_j]\rangle-\langle[\alpha_1, \pi_i],\pi_j\rangle-(-1)^{|\pi_i|-1}[\pi_i, \langle \alpha_1,\pi_j\rangle])\cdot \\
\langle \alpha_2,\pi_{1}\rangle\ldots {\langle \alpha_i,\pi_{i-1}\rangle}\langle \alpha_{i+1},\pi_{i+1}\rangle\ldots
{\langle \alpha_{j-1},\pi_{j-1}\rangle}{\langle \alpha_j,\pi_{j+1}\rangle}\langle \alpha_k,\pi_{k+1}\rangle
\end{multline*}
We conclude from this and \eqref{eq:phixy}  that $\partial \Phi(\alpha)=\Phi(d\alpha)$.
\end{proof}

Thus, according to Lemma \ref{de Rham to Chevalley}, a closed $3$-form $H$ on $X$ gives rise to a Maurer-Cartan element
$\Phi(H)$ in $\Gamma(X;C^\bullet(V^\bullet(\mathcal{O}_X)[1]; V^\bullet(\mathcal{O}_X)[1])[1])$, hence a structure of
an $L_\infty$-algebra on $V^\bullet(\mathcal{O}_X)[1]$ which has the trivial differential (the unary operation), the
binary operation equal to the Schouten-Nijenhuis bracket, the ternary operation given by $\Phi(H)$, and all higher
operations equal to zero. Moreover, cohomologous closed $3$-forms give rise to gauge equivalent Maurer-Cartan elements,
hence to $L_\infty$-isomorphic $L_\infty$-structures.

\begin{notation}
For a closed $3$-form $H$ on $X$ we denote the corresponding $L_\infty$-algebra structure on
$V^\bullet(\mathcal{O}_X)[1]$ by $V^\bullet(\mathcal{O}_X)[1]_H$. Let
\[
\mathfrak{s}(\mathcal{O}_X)_H := \Gamma(X;V^\bullet(\mathcal{O}_X)[1])_H .
\]
\end{notation}

\subsection{$L_\infty$-structures on multivectors via formal geometry}\label{subsection: structures on multivectors formal}
In order to relate the results of \ref{subsection: jets with a twist} with those of \ref{subsection: structures on
multivectors} we consider the analog of the latter for jets.

Let $\widehat{\Omega}^k_{\mathcal{J}/\mathcal{O}} := \mathcal{J}_X(\mathcal{A}^k_X)$, the sheaf of jets of differential
$k$-forms on $X$. Let $\dR$ denote the ($\mathcal{O}_X$-linear) differential in
$\widehat{\Omega}^\bullet_{\mathcal{J}/\mathcal{O}}$ induced by the de Rham differential in $\mathcal{A}^\bullet_X$.
The differential $\dR$ is horizontal with respect to the canonical flat connection $\nabla^{can}$ on
$\widehat{\Omega}^\bullet_{\mathcal{J}/\mathcal{O}}$, hence we have the double complex
$(\mathcal{A}^\bullet_X\otimes\widehat{\Omega}^\bullet_{\mathcal{J}/\mathcal{O}}, \nabla^{can},\id\otimes\dR)$ whose
total complex is denoted $\DR(\widehat{\Omega}^\bullet_{\mathcal{J}/\mathcal{O}})$.

Let $\vac \colon \mathcal{O}_X \to \mathcal{J}_X$ denote the unit map (not to be confused with the map $j^\infty$); it
is an isomorphism onto the kernel of $\dR : \mathcal{J}_X \to \widehat{\Omega}^1_{\mathcal{J}/\mathcal{O}}$ and
therefore defines the morphism of complexes $\vac \colon \mathcal{O}_X \to
\widehat{\Omega}^\bullet_{\mathcal{J}/\mathcal{O}}$ which is a quasi-isomorphism. The map $\vac$ is horizontal with
respect to the canonical flat connections on $\mathcal{O}_X$ and $\mathcal{J}_X$ (respectively,
$\widehat{\Omega}^\bullet_{\mathcal{J}/\mathcal{O}}$), therefore we have the induced map of respective de Rham
complexes $\DR(\vac) \colon \mathcal{A}^\bullet_X \to \DR(\mathcal{J}_X)$ (respectively, $\DR(\vac) \colon
\mathcal{A}^\bullet_X \to \DR(\widehat{\Omega}^\bullet_{\mathcal{J}/\mathcal{O}})$, a quasi-isomorphism).

Let $C^\bullet(\mathfrak{g}(\mathcal{J}_X); \mathfrak{g}(\mathcal{J}_X))$ denote the complex of continuous
$\mathcal{O}_X$-multilinear cochains. The map of differential graded Lie algebras
\begin{equation}\label{DR to def jet}
\widehat{\Phi} \colon \widehat{\Omega}^\bullet_{\mathcal{J}/\mathcal{O}}[2] \to
C^\bullet(V^\bullet(\mathcal{J}_X)[1]; V^\bullet(\mathcal{J}_X)[1])[1]
\end{equation}
defined in the same way as \eqref{DR to def} is horizontal with respect to the canonical flat connection $\nabla^{can}$
and induces the map
\begin{equation}\label{DR to def de Rham}
\DR(\widehat{\Phi}) \colon \DR(\widehat{\Omega}^\bullet_{\mathcal{J}/\mathcal{O}})[2] \to \DR((C^\bullet(V^\bullet(\mathcal{J}_X)[1]; V^\bullet(\mathcal{J}_X)[1])[1])
\end{equation}
There is a canonical morphism of sheaves of differential graded Lie algebras
\begin{equation}\label{DR Der to Der DR}
\DR(C^\bullet(V^\bullet(\mathcal{J}_X)[1]; V^\bullet(\mathcal{J}_X)[1])[1]) \to C^\bullet(\DR(V^\bullet(\mathcal{J}_X)[1]); \DR(V^\bullet(\mathcal{J}_X)[1]))[1]
\end{equation}
Therefore, a degree three cocycle in $\Gamma(X;\DR(\widehat{\Omega}^\bullet_{\mathcal{J}/\mathcal{O}}))$ determines an
$L_\infty$-structure on $\DR(V^\bullet(\mathcal{J}_X)[1])$ and cohomologous cocycles determine $L_\infty$-isomorphic
structures.

\begin{notation}
For a section $B \in \Gamma(X;\mathcal{A}^2_X\otimes\mathcal{J}_X)$ we denote by $\overline{B}$ it's image in
$\Gamma(X;\mathcal{A}^2_X\otimes\mathcal{J}_X/\mathcal{O}_X)$.
\end{notation}

\begin{lemma}\label{H exists}
If $B \in \Gamma(X;\mathcal{A}^2_X\otimes\mathcal{J}_X)$ satisfies $\nabla^{can}\overline{B} = 0$, then
\begin{enumerate}
\item $\dR B$ is a (degree three) cocycle in $\Gamma(X;\DR(\widehat{\Omega}^\bullet_{\mathcal{J}/\mathcal{O}}))$;

\item there exist a unique $H \in \Gamma(X;\mathcal{A}^3_X)$ such that $dH=0$ and $\DR(\vac)(H) = \nabla^{can}B$.
\end{enumerate}
\end{lemma}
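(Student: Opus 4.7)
The plan is to derive both assertions from the short exact sequence $0 \to \mathcal{O}_X \xrightarrow{\vac} \mathcal{J}_X \to \mathcal{J}_X/\mathcal{O}_X \to 0$ together with two structural facts recorded in the preceding subsection: that $\vac$ lands in $\ker(\dR|_{\mathcal{J}_X})$ and that $\vac$ is horizontal with respect to $\nabla^{can}$ (so that $\DR(\vac)$ intertwines the de Rham differential $d$ on $\mathcal{A}^\bullet_X$ with $\nabla^{can}$ on $\DR(\mathcal{J}_X)$).

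I would address (2) first, since it is essentially a repetition of the discussion of the connecting homomorphism in \ref{subsection: twisted forms}. The hypothesis $\nabla^{can}\overline{B} = 0$ says that $\nabla^{can}B \in \Gamma(X;\mathcal{A}^3_X\otimes\mathcal{J}_X)$ projects to zero in $\Gamma(X;\mathcal{A}^3_X\otimes\mathcal{J}_X/\mathcal{O}_X)$. Tensoring the short exact sequence above with the flat $\mathcal{O}_X$-module $\mathcal{A}^3_X$ keeps it exact and shows that $\nabla^{can}B$ lies in the image of $\DR(\vac)\colon \mathcal{A}^3_X \to \mathcal{A}^3_X\otimes\mathcal{J}_X$; since $\vac$ is injective, the preimage $H$ is unique. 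To show $dH = 0$, apply $\nabla^{can}$ to both sides of $\DR(\vac)(H) = \nabla^{can}B$: the right-hand side vanishes because $(\nabla^{can})^2 = 0$, the left-hand side equals $\DR(\vac)(dH)$ since $\DR(\vac)$ is a chain map, and injectivity of $\DR(\vac)$ forces $dH = 0$.

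For (1), the total differential on $\DR(\widehat{\Omega}^\bullet_{\mathcal{J}/\mathcal{O}})$ is $\nabla^{can} \pm \dR$, so I would check the two pieces separately. The $\dR$-piece vanishes by $\dR^2 = 0$. For the $\nabla^{can}$-piece, the horizontality of $\dR$ means it commutes up to sign with $\nabla^{can}$, whence
\[
\nabla^{can}(\dR B) \;=\; \pm\,\dR(\nabla^{can}B) \;=\; \pm\,\dR\bigl(\DR(\vac)(H)\bigr),
\]
using part (2). But $\DR(\vac)(H)$ lies in $\mathcal{A}^3_X \otimes \vac(\mathcal{O}_X) \subset \mathcal{A}^3_X \otimes \ker(\dR)$, so $\dR$ annihilates it. Hence $\dR B$ is a cocycle of total degree three in $\Gamma(X;\DR(\widehat{\Omega}^\bullet_{\mathcal{J}/\mathcal{O}}))$.

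There is no real conceptual obstacle here; the only care needed is bookkeeping of signs in the bicomplex $(\mathcal{A}^\bullet_X \otimes \widehat{\Omega}^\bullet_{\mathcal{J}/\mathcal{O}}, \nabla^{can}, \id\otimes\dR)$, and these follow standard Koszul conventions. The proof essentially repackages the connecting homomorphism argument from \ref{subsection: twisted forms} and observes that, precisely because $\vac$ is injective with horizontal image and image killed by $\dR$, the obstruction cocycle $\dR B$ comes directly from the closed three-form $H$ produced in (2).
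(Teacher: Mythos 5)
Your proof is correct and follows essentially the same route as the paper's: both rest on the facts that $\vac$ is injective with image equal to $\ker(\dR|_{\mathcal{J}_X})$ (equivalently, the kernel of the projection to $\mathcal{J}_X/\mathcal{O}_X$) and that $\DR(\vac)$ is a chain map, the only difference being that you establish (2) first and then feed $\nabla^{can}B=\DR(\vac)(H)$ into (1), whereas the paper kills $\nabla^{can}(\dR B)=\pm\dR(\nabla^{can}B)$ directly from $\nabla^{can}\overline{B}=0$ and the fact that $\dR$ factors through $\mathcal{J}_X/\mathcal{O}_X$.
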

\begin{proof}
For the first claim it suffices to show that $\nabla^{can}B = 0$. This follows from the assumption that
$\nabla^{can}\overline{B} = 0$ and the fact that $\dR : \mathcal{A}^\bullet_X\otimes\mathcal{J}_X \to
\mathcal{A}^\bullet_X\otimes\widehat{\Omega}^1_{\mathcal{J}/\mathcal{O}}$ factors through
$\mathcal{A}^\bullet_X\otimes\mathcal{J}_X/\mathcal{O}_X$.

We have: $\dR\nabla^{can} B = \nabla^{can}\dR B = 0$. Therefore, $\nabla^{can} B$ is in the image of $\DR(\vac) :
\Gamma(X;\mathcal{A}^3_X) \to \Gamma(X;\mathcal{A}^3_X\otimes\mathcal{J}_X)$ which is injective, whence the existence
and uniqueness of $H$. Since $\DR(\vac)$ is a morphism of complexes it follows that $H$ is closed.
\end{proof}

Suppose that $B \in \Gamma(X;\mathcal{A}^2_X\otimes\mathcal{J}_X)$ satisfies $\nabla^{can}\overline{B} = 0$. Then, the
differential graded Lie algebra $\DR(\mathfrak{g}(\mathcal{J}_X))_{\overline{B}}$ (the $\overline{B}$-twist of
$\DR(\mathfrak{g}(\mathcal{J}_X))$) is defined. On the other hand, due to Lemma \ref{H exists}, \eqref{DR to def de
Rham} and \eqref{DR Der to Der DR}, $\dR B$ gives rise to an $L_\infty$-structure on
$\DR(V^\bullet(\mathcal{J}_X)[1])$.

\begin{lemma}\label{lemma: dB gives twist}
The $L_\infty$-structure induced by $\dR B$ is that of a differential graded Lie algebra equal to
$\DR(V^\bullet(\mathcal{J}_X)[1])_{\overline{B}}$.
\end{lemma}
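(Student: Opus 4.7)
My plan is to identify the Maurer--Cartan element produced from $\dR B$ as a purely unary perturbation equal to the derivation $i_{\overline{B}}$, after which the lemma follows immediately. By construction, the $L_\infty$-structure induced by $\dR B$ perturbs the canonical DGLA structure on $\DR(V^\bullet(\mathcal{J}_X)[1])$ --- with differential $\nabla^{can}$, Schouten--Nijenhuis bracket, and no higher operations --- by the image of $\dR B$ under $\DR(\widehat{\Phi})$ composed with \eqref{DR Der to Der DR}. Since $\DR(V^\bullet(\mathcal{J}_X)[1])_{\overline{B}}$ differs from the canonical DGLA only in that $\nabla^{can}$ is replaced by $\nabla^{can} + i_{\overline{B}}$, it suffices to show that this perturbation equals the unary derivation $i_{\overline{B}}$.

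First I would reduce to a local statement: writing $B = \sum_I \beta_I \otimes f_I$ locally, with $\beta_I \in \mathcal{A}^2_X$ and $f_I \in \mathcal{J}_X$, one has $\dR B = \sum_I \beta_I \otimes \dR f_I$. The main computational step is to show that on a jet $1$-form $\dR f$ the map $\widehat{\Phi}$ produces the unary operation $\pi \mapsto \langle \dR f, \pi \rangle$, and that this operation equals $[f, \pi]$, the Schouten--Nijenhuis bracket of $f \in V^0(\mathcal{J}_X)$ with $\pi$. The identity $\langle \dR f, X \rangle = [f, X]$ for a single vector field is the lowest-arity instance of \eqref{eq:phixy}; it propagates to arbitrary multivectors $\pi$ by the derivation property of the Schouten bracket and the Leibniz rule for the contraction pairing. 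Because the Schouten bracket with a locally constant jet vanishes, the resulting operation descends along $\mathcal{J}_X \to \mathcal{J}_X/\mathcal{O}_X$ and coincides with the contraction $i_{\overline{f}}$, giving $\widehat{\Phi}(\dR f_I) = i_{\overline{f_I}}$.

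To conclude, the map \eqref{DR Der to Der DR} sends a form-valued unary operation $\beta \otimes \phi$, with $\beta \in \mathcal{A}^\bullet_X$, to the derivation of $\DR V^\bullet = \mathcal{A}^\bullet_X \otimes V^\bullet(\mathcal{J}_X)$ that acts on a decomposable section $\eta \otimes \pi$ by the Koszul-signed expression $\pm (\beta \wedge \eta) \otimes \phi(\pi)$. Summing over $I$ in $\widehat{\Phi}(\dR B) = \sum_I \beta_I \otimes i_{\overline{f_I}}$ then returns exactly $i_{\overline{B}}$ as a derivation of $\DR V^\bullet$. Since $\dR B$ has no component of arity greater than one, no higher operations are introduced and the Schouten--Nijenhuis bracket is unchanged, so the induced structure is genuinely a DGLA, with differential $\nabla^{can} + i_{\overline{B}}$ and Schouten--Nijenhuis bracket, i.e.\ $\DR(V^\bullet(\mathcal{J}_X)[1])_{\overline{B}}$. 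The one step requiring real work is the identity $\langle \dR f, \pi \rangle = [f, \pi]$, together with a sign check for \eqref{DR Der to Der DR}; the remainder is $\mathcal{A}^\bullet_X$-linear bookkeeping.
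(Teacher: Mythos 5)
The paper's own ``proof'' of this lemma is literally \emph{Left to the reader}, and your argument supplies exactly the intended content: since $\dR B$ has bidegree $(2,1)$ in $\mathcal{A}^\bullet_X\otimes\widehat{\Omega}^\bullet_{\mathcal{J}/\mathcal{O}}$, only the unary operation of the induced $L_\infty$-structure is perturbed (no curvature, no higher brackets), and identifying $\widehat{\Phi}(\dR f)=\langle \dR f,\cdot\rangle$ with the adjoint action $i_f=[f,\cdot]$ --- both being degree $-1$ derivations of the graded commutative algebra $V^\bullet(\mathcal{J}_X)$ that vanish on $V^0$ and agree on the generators $\Der^{cont}_{\mathcal{O}_X}(\mathcal{J}_X)$, and both killing the image of $\vac$ so that the operation depends only on $\overline{B}$ --- reduces the claim to the $\mathcal{A}^\bullet_X$-linear bookkeeping you describe. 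Your proof is correct and is essentially the unique sensible route; the only outstanding point is the sign verification you already flag, which depends on the paper's conventions for $i_a$ and for the map \eqref{DR Der to Der DR}.
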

\begin{proof}
Left to the reader.
\end{proof}

\begin{notation}
For a $3$-cocycle $\omega \in \Gamma(X;\DR(\widehat{\Omega}^\bullet_{\mathcal{J}/\mathcal{O}}))$ we will denote by
$\DR(V^\bullet(\mathcal{J}_X)[1])_\omega$ the $L_\infty$-algebra obtained from $\omega$ via \eqref{DR to def de Rham}
and \eqref{DR Der to Der DR}. Let
\[
\mathfrak{s}_\DR(\mathcal{J}_X)_\omega := \Gamma(X;\DR(V^\bullet(\mathcal{J}_X)[1]))_\omega .
\]

\end{notation}
\begin{remark}
Lemma \ref{lemma: dB gives twist} shows that this notation is unambiguous with reference to the previously introduced
notation for the twist. In the notations introduced above, $\dR B$ is the image of $\overline{B}$ under the
\emph{injective} map $\Gamma(X; \mathcal{A}^2_X \otimes \mathcal{J}_X/\mathcal{O}_X) \to \Gamma(X; \mathcal{A}^2_X
\otimes\widehat{\Omega}^1_{\mathcal{J}/\mathcal{O}})$ which factors $\dR$ and ``allows'' us to ``identify''
$\overline{B}$ with $\dR B$.
\end{remark}

\begin{thm}\label{thm: hoch jet is schouten twist}
Suppose that $B \in \Gamma(X;\mathcal{A}^2_X\otimes\mathcal{J}_X)$ satisfies $\nabla^{can}\overline{B} = 0$. Let $H \in
\Gamma(X;\mathcal{A}^3_X)$ denote the unique $3$-form such that $\DR(\vac)(H) = \nabla^{can}B$ (cf. Lemma \ref{H
exists}). Then, the $L_\infty$-algebras $\mathfrak{g}_\DR(\mathcal{J}_X)_{\overline{B}}$ and
$\mathfrak{s}(\mathcal{O}_X)_H$ are $L_\infty$-quasi-isomorphic.
\end{thm}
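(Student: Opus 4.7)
The plan is to build a chain of $L_\infty$-quasi-isomorphisms linking $\mathfrak{g}_\DR(\mathcal{J}_X)_{\overline{B}}$ with $\mathfrak{s}(\mathcal{O}_X)_H$ in three pieces: globalizing the twisted formality zigzag of \ref{subsection: jets with a twist} to reach $\mathfrak{s}_\DR(\mathcal{J}_X)_{\dR B}$, constructing a Fedosov-type quism from multivectors on $X$ into the jet-de Rham complex of multivectors on $\mathcal{J}_X$, and identifying the two natural twisting $3$-cocycles by an explicit cohomology in $\DR(\widehat{\Omega}^\bullet_{\mathcal{J}/\mathcal{O}})$.

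For the first piece, I would apply $\Gamma(X;-)$ to the sheaf-level zigzag \eqref{diag e2 alg jets de Rham twisted}, combined on the Hochschild side with the $L_\infty$-quism between $\DR(C^\bullet(\mathcal{J}_X)[1])_{\overline{B}}$ and its $\mathbf{e_2}$-resolution stated at the end of \ref{subsection: jets with a twist}. Since all of the sheaves involved are modules over the fine sheaf $\mathcal{A}^\bullet_X$, they are soft and global sections preserve quasi-isomorphism; together with Lemma \ref{lemma: dB gives twist} this produces an $L_\infty$-quasi-isomorphism
\[
\mathfrak{g}_\DR(\mathcal{J}_X)_{\overline{B}} \simeq \mathfrak{s}_\DR(\mathcal{J}_X)_{\dR B}.
\]

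For the second piece, the natural map $V^\bullet(\mathcal{O}_X) \to V^\bullet(\mathcal{J}_X)$ sending a function to its $\infty$-jet and a vector field to the corresponding continuous $\mathcal{O}_X$-linear derivation of $\mathcal{J}_X$ lands in the $\nabla^{can}$-horizontal sections and is a morphism of sheaves of graded Lie algebras (for the Schouten brackets). Composing with the inclusion of horizontal sections into degree zero of the de Rham piece yields a map $V^\bullet(\mathcal{O}_X)[1] \to \DR(V^\bullet(\mathcal{J}_X)[1])$ which is a quasi-isomorphism by the standard Poincar\'e-type lemma for the canonical connection on jet bundles. Naturality of the $\Phi$-construction \eqref{DR to def}--\eqref{DR to def de Rham} then forces this map to intertwine $\Phi(H)$ with $\DR(\widehat{\Phi})(\DR(\vac)(H))$, so that it promotes to a strict $L_\infty$-morphism $\mathfrak{s}(\mathcal{O}_X)_H \to \mathfrak{s}_\DR(\mathcal{J}_X)_{\DR(\vac)(H)}$.

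For the third piece, the identity $(\nabla^{can} + \id\otimes\dR)B = \nabla^{can}B + \dR B = \DR(\vac)(H) + \dR B$ exhibits $\DR(\vac)(H)$ and $-\dR B$ as cohomologous $3$-cocycles in $\Gamma(X;\DR(\widehat{\Omega}^\bullet_{\mathcal{J}/\mathcal{O}}))$, with $B$ itself furnishing the primitive; by the remark following \eqref{DR Der to Der DR} such cocycles determine $L_\infty$-isomorphic $L_\infty$-structures on $\DR(V^\bullet(\mathcal{J}_X)[1])$, giving an $L_\infty$-isomorphism $\mathfrak{s}_\DR(\mathcal{J}_X)_{\DR(\vac)(H)} \cong \mathfrak{s}_\DR(\mathcal{J}_X)_{\dR B}$. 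Concatenating the three pieces yields the claim. I expect the main obstacle to be the second piece: the jet map on multivectors has to be built so as to make its compatibility with both Schouten brackets and with $\widehat{\Phi}$ manifest, which rests on a careful identification of $\mathcal{T}_X$ with a distinguished sub-Lie-algebra of $\Der_{\mathcal{O}_X}^{cont}(\mathcal{J}_X)$; the first and third pieces, once the diagrams of \ref{subsection: DTT for jets}--\ref{subsection: structures on multivectors formal} are in hand, amount to homological bookkeeping.
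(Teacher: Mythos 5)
Your proposal is correct and follows essentially the same route as the paper: your three pieces are exactly the paper's three steps (the twisted formality zigzag of Corollary \ref{cor: e2 coalg quisms} together with Lemma \ref{lemma: dB gives twist}, the quasi-isomorphism $j^\infty \colon \mathfrak{s}(\mathcal{O}_X)_H \to \mathfrak{s}_\DR(\mathcal{J}_X)_{\DR(\vac)(H)}$, and the homology furnished by $B$ between $\DR(\vac)(H)$ and $\dR B$), merely concatenated in the opposite order and with some of the implicit justifications (softness, the Poincar\'e lemma for $\nabla^{can}$, compatibility of $j^\infty$ with $\Phi$ and $\widehat{\Phi}$) spelled out. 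The one detail to tidy is the sign in your third piece: with the sign conventions needed for the total differential of the double complex, $B$ is a primitive for the difference of $\DR(\vac)(H)=\nabla^{can}B$ and $\dR B$ (not $-\dR B$), which is what makes the twist by $\DR(\vac)(H)$ match the twist by $\overline{B}$ exactly as in the statement.
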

\begin{proof}
The map $j^\infty \colon V^\bullet(\mathcal{O}_X) \to V^\bullet(\mathcal{J}_X)$ induces a quasi-isomorphism of sheaves
of DGLA
\begin{equation}\label{j-infty}
j^\infty \colon V^\bullet(\mathcal{O}_X)[1] \to \DR(V^\bullet(\mathcal{J}_X)[1]) .
\end{equation}
Suppose that $H$ is a closed $3$-form on $X$. Then, the map \eqref{j-infty} is a quasi-isomorphism of sheaves of
$L_\infty$-algebras
\[
j^\infty \colon V^\bullet(\mathcal{O}_X)[1]_H \to \DR(V^\bullet(\mathcal{J}_X)[1])_{\DR(\vac)(H)} .
\]
Passing to global section we obtain the quasi-isomorphism of $L_\infty$-algebras
\begin{equation}\label{j-infty H}
j^\infty \colon \mathfrak{s}(\mathcal{O}_X)_H \to \mathfrak{s}_\DR(\mathcal{J}_X)_{\DR(\vac)(H)} .\end{equation}

By assumption, $B$ provides a homology between $\dR B$ and $\nabla^{can} B = \DR(\vac)(H)$. Therefore, we have the
corresponding $L_\infty$-quasi-isomorphism
\begin{equation}
\DR(V^\bullet(\mathcal{J}_X)[1])_{\DR(\vac)(H)} \stackrel{L_\infty}{\cong} \DR(V^\bullet(\mathcal{J}_X)[1])_{\dR B} = \DR(V^\bullet(\mathcal{J}_X)[1])_{\overline{B}}
\end{equation}
(the second equality is due to Lemma \ref{lemma: dB gives twist}).

According to Corollary \ref{cor: e2 coalg quisms} the sheaf of DGLA  $\DR(V^\bullet(\mathcal{J}_X)[1])_{\overline{B}}$
is $L_\infty$-quasi-isomorphic to the DGLA deduced form the differential graded $\mathbf{e_2}$-algebra
$\DR(\Omega_\mathbf{e_2}(\mathbb{F}_{\dual{\mathbf{e_2}}}(C^\bullet(\mathcal{J}_X)), M))_{\overline{B}}$. The latter
DGLA is $L_\infty$-quasi-isomorphic to $\DR(C^\bullet(\mathcal{J}_X)[1])_{\overline{B}}$.

Passing to global sections we conclude that $\mathfrak{s}_\DR(\mathcal{J}_X)_{\DR(\vac)(H)}$ and
$\mathfrak{g}_\DR(\mathcal{J}_X)_{\overline{B}}$ are $L_\infty$-quasi-isomorphic. Together with \eqref{j-infty H} this
implies the claim.
\end{proof}

\section{Application to deformation theory}\label{deform}

\begin{thm}\label{main}
Suppose that $\mathcal{S}$ is a twisted form of $\mathcal{O}_X$ (\ref{subsection: twisted forms}). Let $H$ be a closed
$3$-form on $X$ which represents $[\mathcal{S}]_{dR} \in H^3_{dR}(X)$. For any Artin algebra $R$ with maximal ideal
$\mathfrak{m}_R$ there is an equivalence of $2$-groupoids
\[
\MC^2(\mathfrak{s}(\mathcal{O}_X)_H\otimes\mathfrak{m}_R) \cong \Def(\mathcal{S})(R)
\]
natural in $R$.
\end{thm}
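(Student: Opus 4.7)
The plan is to chain together the three results of the preceding sections. First I would make a convenient choice of representative for $[\mathcal{S}]$ in order to match the hypothesis of Theorem \ref{thm: hoch jet is schouten twist} with the cochain $\omega$ feeding Theorem \ref{cor: Def is MC jets}. By construction of the class $[\mathcal{S}]$ in \ref{subsection: twisted forms}, one can represent it by a closed section $\overline{B} \in \Gamma(X;\mathcal{A}^2_X\otimes\mathcal{J}_X/\mathcal{O}_X)$, that is, with $\nabla^{can}\overline{B}=0$; lift it to some $B\in\Gamma(X;\mathcal{A}^2_X\otimes\mathcal{J}_X)$ (such a lift exists locally and the obstruction to a global lift vanishes, but in any case the argument below is local in nature and then globalized). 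By Lemma \ref{H exists} there is a unique closed $3$-form $H_B\in\Gamma(X;\mathcal{A}^3_X)$ with $\DR(\vac)(H_B)=\nabla^{can}B$, and by the description of the connecting homomorphism in \ref{subsection: twisted forms}, $H_B$ represents $[\mathcal{S}]_{dR}\in H^3_{dR}(X)$.

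With this choice in hand I would then assemble the equivalence as a composition. Theorem \ref{cor: Def is MC jets} applied with $\omega=\overline{B}$ yields
\[
\Def(\mathcal{S})(R)\;\cong\;\MC^2(\mathfrak{g}_\DR(\mathcal{J}_X)_{\overline{B}}\otimes\mathfrak{m}_R),
\]
natural in $R$. Theorem \ref{thm: hoch jet is schouten twist} supplies an $L_\infty$-quasi-isomorphism between $\mathfrak{g}_\DR(\mathcal{J}_X)_{\overline{B}}$ and $\mathfrak{s}(\mathcal{O}_X)_{H_B}$. Finally, since $H$ and $H_B$ are cohomologous closed $3$-forms representing the same class $[\mathcal{S}]_{dR}$, the discussion in \ref{subsection: structures on multivectors} provides an $L_\infty$-isomorphism $\mathfrak{s}(\mathcal{O}_X)_{H_B}\cong\mathfrak{s}(\mathcal{O}_X)_H$. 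Tensoring all three $L_\infty$-morphisms with the nilpotent commutative algebra $\mathfrak{m}_R$ and invoking Theorem \ref{thm: quism invariance of mc} — together with the observation that the simplicial sets $\gamma_\bullet(-\otimes\mathfrak{m}_R)$ in question are $2$-hypergroupoids, whose homotopy equivalences are exactly equivalences of the underlying $2$-groupoids $\MC^2$ — chains together to give the desired equivalence
\[
\MC^2(\mathfrak{s}(\mathcal{O}_X)_H\otimes\mathfrak{m}_R)\;\cong\;\Def(\mathcal{S})(R).
\]

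The main step where some care is needed is not in any one of the three inputs, which are already established, but in checking that the composite equivalence is natural in $R$ and independent (up to canonical equivalence) of the auxiliary choices $B$ and $H_B$. Naturality in $R$ follows because each of Theorem \ref{cor: Def is MC jets}, Theorem \ref{thm: quism invariance of mc}, and the passage from an $L_\infty$-quasi-isomorphism to the induced map on $\gamma_\bullet(-\otimes\mathfrak{m}_R)$ is functorial in the Artin algebra; so the composition is a natural transformation of $2$-groupoid-valued functors on Artin algebras. Independence of the choice of $B$ (and hence of $H_B$) reduces to the gauge invariance statement recalled in \ref{subsection: structures on multivectors} that cohomologous closed $3$-forms give $L_\infty$-isomorphic $L_\infty$-structures on $V^\bullet(\mathcal{O}_X)[1]$. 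Beyond these bookkeeping points the theorem is a formal consequence of the results assembled above.
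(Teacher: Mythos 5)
Your proposal is correct and follows essentially the same route as the paper: choose $B$ lifting a representative of $[\mathcal{S}]$, invoke Theorem \ref{cor: Def is MC jets}, Theorem \ref{thm: hoch jet is schouten twist}, and Theorem \ref{thm: quism invariance of mc}, and absorb the discrepancy between cohomologous $3$-forms via the gauge-invariance statement of \ref{subsection: structures on multivectors}. The only cosmetic difference is that the paper replaces $H$ by the compatible representative at the outset, whereas you keep $H$ and insert the $L_\infty$-isomorphism $\mathfrak{s}(\mathcal{O}_X)_{H_B}\cong\mathfrak{s}(\mathcal{O}_X)_H$ at the end.
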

\begin{proof}
Since cohomologous $3$-forms give rise to $L_\infty$-quasi-isomorphic $L_\infty$-algebras we may assume, possibly
replacing $H$ by another representative of $[\mathcal{S}]_{dR}$, that there exists
$B\in\Gamma(X;\mathcal{A}_X^2\otimes\mathcal{J}_X)$ such that $\overline{B}$ represents $[\mathcal{S}]$ and
$\nabla^{can}B = \DR(\vac)(H)$. By Theorem \ref{thm: hoch jet is schouten twist} $\mathfrak{s}(\mathcal{O}_X)_H$ is
$L_\infty$-quasi-isomorphic to $\mathfrak{g}_\DR(\mathcal{J}_X)_{\overline{B}}$. By the Theorem \ref{thm: quism invariance of mc}
we have a homotopy equivalence of nerves of $2$-groupoids $\gamma_{\bullet}(\mathfrak{s}(\mathcal{O}_X)_H\otimes\mathfrak{m}_R) \cong
\gamma_{\bullet}(\mathfrak{g}_\DR(\mathcal{J}_X)_{\overline{B}}\otimes\mathfrak{m}_R) $.
Therefore, there are equivalences
\[
\MC^2(\mathfrak{s}(\mathcal{O}_X)_H\otimes\mathfrak{m}_R) \cong
\MC^2(\mathfrak{g}_\DR(\mathcal{J}_X)_{\overline{B}}\otimes\mathfrak{m}_R) \cong
\Def(\mathcal{S})(R) ,
\]
the second one being that of Theorem \ref{cor: Def is MC jets}.
\end{proof}

\begin{remark}
In particular, the isomorphism classes of formal deformations of $\mathcal S$ are in a bijective correspondence with
equivalence classes of Maurer-Cartan elements of the $L_\infty$-algebra $\mathfrak{s}_\DR(\mathcal{O}_X)_H
\widehat{\otimes} t\cdot\mathbb{C}[[t]]$. These are the formal \emph{twisted Poisson structures} in the terminology of
\cite{SW}, i.e. the formal series $\pi =\sum_{k=1}^\infty t^k\pi_k$, $\pi_k \in \Gamma(X; \bigwedge^2\mathcal{T}_X)$,
satisfying the equation
\[
[\pi,\pi] = \Phi(H)(\pi,\pi,\pi).
\]
A construction of an algebroid stack associated to a twisted Poisson structure was proposed by P.~\v Severa in
\cite{S}.
\end{remark}


\begin{thebibliography}{10}

\bibitem{BGNT}
P.~Bressler, A.~Gorokhovsky, R.~Nest, and B.~Tsygan.
\newblock Deformation quantization of gerbes.
\newblock {\em Adv. Math.}, 214(1):230--266, 2007.

\bibitem{BGNT1}
P.~Bressler, A.~Gorokhovsky, R.~Nest, and B.~Tsygan.
\newblock Deformations of gerbes on smooth manifolds.
\newblock In {\em K-theory and noncommutative geometry}. European Mathematical
  Society, 2008.

\bibitem{DAP}
A.~D'Agnolo and P.~Polesello.
\newblock Stacks of twisted modules and integral transforms.
\newblock In {\em Geometric aspects of Dwork theory. Vol. I, II}, pages
  463--507. Walter de Gruyter GmbH \& Co. KG, Berlin, 2004.

\bibitem{Del}
P.~Deligne.
\newblock {Letter to L.~Breen}, 1994.

\bibitem{DTT}
V.~Dolgushev, D.~Tamarkin, and B.~Tsygan.
\newblock The homotopy {G}erstenhaber algebra of {H}ochschild cochains of a
  regular algebra is formal.
\newblock {\em J. Noncommut. Geom.}, 1(1):1--25, 2007.

\bibitem{duskin}
J.~W. Duskin.
\newblock Simplicial matrices and the nerves of weak {$n$}-categories. {I}.
  {N}erves of bicategories.
\newblock {\em Theory Appl. Categ.}, 9:198--308 (electronic), 2001/02.
\newblock CT2000 Conference (Como).

\bibitem{G1}
E.~Getzler.
\newblock {Lie theory for nilpotent L-infinity algebras}.
\newblock \emph{Ann. of Math.}, to appear, arXiv:math.AT/0404003.

\bibitem{G2}
E.~Getzler.
\newblock A {D}arboux theorem for {H}amiltonian operators in the formal
  calculus of variations.
\newblock {\em Duke Math. J.}, 111(3):535--560, 2002.

\bibitem{KS}
M.~Kashiwara and P.~Schapira.
\newblock {\em Categories and sheaves}, volume 332 of {\em Grundlehren der
  Mathematischen Wissenschaften [Fundamental Principles of Mathematical
  Sciences]}.
\newblock Springer-Verlag, Berlin, 2006.

\bibitem{K1}
M.~Kontsevich.
\newblock Formality conjecture.
\newblock In {\em Deformation theory and symplectic geometry ({A}scona, 1996)},
  volume~20 of {\em Math. Phys. Stud.}, pages 139--156. Kluwer Acad. Publ.,
  Dordrecht, 1997.

\bibitem{K}
M.~Kontsevich.
\newblock Deformation quantization of {P}oisson manifolds.
\newblock {\em Lett. Math. Phys.}, 66(3):157--216, 2003.

\bibitem{S}
P.~{\v{S}}evera.
\newblock Quantization of {P}oisson families and of twisted {P}oisson
  structures.
\newblock {\em Lett. Math. Phys.}, 63(2):105--113, 2003.

\bibitem{SW}
P.~{\v{S}}evera and A.~Weinstein.
\newblock Poisson geometry with a 3-form background.
\newblock {\em Progr. Theoret. Phys. Suppl.}, (144):145--154, 2001.
\newblock Noncommutative geometry and string theory (Yokohama, 2001).

\bibitem{T}
D.~Tamarkin.
\newblock Another proof of {M}. {K}ontsevich formality theorem.
\newblock preprint, arXiv:math/9803025v4 [math.QA].

\end{thebibliography}
\end{document}